\providecommand{\arxiv}[2][]{\href{http://www.arXiv.org/abs/#2}{arXiv:#2}}
\newtheorem{proposition}{Proposition} 
\newtheorem{lemma}{Lemma} 
\newtheorem{theorem}{Theorem} 
\theoremstyle{remark}
\newtheorem{remark}{Remark} 
\newtheorem{example}{Example}
\title{Minimal half-spaces and external representation of tropical polyhedra}
\author{{S}t{\'e}phane {G}aubert}
\address{St\'ephane Gaubert, INRIA and Centre de Math\'ematiques Appliqu\'ees (CMAP), \'Ecole Polytechnique. Postal address: CMAP, \'Ecole Polytechnique, 91128 Palaiseau Cedex France.}
\email{Stephane.Gaubert@inria.fr}
\author{Ricardo D. Katz}
\address{Ricardo D. Katz, CONICET. Postal address: Instituto de Matem\'atica ``Beppo Levi'', Universidad Nacional de 
Rosario, Avenida Pellegrini 250, 2000 Rosario, Argentina.}
\email{rkatz@fceia.unr.edu.ar}
\keywords{Max-plus semiring, max-plus convexity, tropical convexity, polyhedra, polytopes, Minkowski-Weyl Theorem, supporting half-spaces}
\subjclass[2000]{primary: 52A01 , secondary: 16Y60, 06A07.}
\thanks{The first author was partially supported by Arpege programme of the French National Agency of Research (ANR), project ``ASOPT'', number ANR-08-SEGI-005 and by the Digiteo project DIM08 ``PASO'' number 3389}
\DeclareMathAlphabet{\mathbbold}{U}{bbold}{m}{n}
\newcommand{\zero}{\mathbbold{0}}
\newcommand{\unit}{\mathbbold{1}}
\newcommand{\new}[1]{{\em #1}}
\newcommand{\myminus}[1]{{#1}^{-}}
\newcommand{\set}[2]{\left\{#1\mid\,#2\right\}}
\newcommand{\R}{\mathbb{R}}
\newcommand{\N}{\mathbb{N}}
\newcommand{\Rmin}{\R\cup\{+\infty\}}
\newcommand{\rmax}{\R_{\max}}
\newcommand{\rmin}{\R_{\min}}
\newcommand{\co}{\mbox{\rm co}\,}
\newcommand{\cone}{\mbox{\rm cone}\,}
\newcommand{\supp}{\mbox{\rm supp}\,}
\newcommand{\uvector}{\mbox{\rm e}}
\newcommand{\sop}{\mbox{\rm supp}\,}
\newcommand{\type}{\mbox{\rm type}}
\newcommand{\sV}{\mathscr{V}}
\newcommand{\sC}{\mathscr{C}}
\newcommand{\sW}{\mathscr{W}}
\newcommand{\sH}{\mathscr{H}}
\newcommand{\sX}{\mathscr{X}}
\newcommand{\sY}{\mathscr{Y}}
\newcommand{\sZ}{\mathscr{Z}}
\begin{document}
\begin{abstract}
We give a characterization of the minimal tropical half-spaces containing a given tropical polyhedron, from which we derive a counter example showing that the number of
such minimal half-spaces can be infinite, contradicting some
statements which appeared in the tropical literature, and disproving
a conjecture of F. Block and J. Yu. 
We also establish an analogue of
the Minkowski-Weyl theorem, showing that a tropical polyhedron
can be equivalently represented internally (in terms of extreme points and rays)
or externally (in terms of half-spaces containing it). A canonical 
external representation of a polyhedron turns out to be provided by
the extreme elements of its tropical polar. We characterize these
extreme elements, showing in particular that they are determined
by support vectors.
\end{abstract}

\maketitle 

\section{Introduction}
Max-plus or tropical convexity has been developed by several researchers
under different names, with various motivations.
It goes back at least to the work of
Zimmermann~\cite{zimmerman77}. It was studied by Litvinov, Maslov, and 
Shpiz~\cite{litvinov00}, in relation to problems of calculus of variations,
and by Cohen, Gaubert, and Quadrat~\cite{cgq00,cgq02}, motivated by discrete event system
problems (max-plus polyhedra represent invariant spaces
of max-plus linear dynamical systems~\cite{ccggq99}). Some of this work was pursued with Singer (see~\cite{cgqs04}), 
with motivations from generalized convexity~\cite{ACA}. 
The work of Briec and Horvath~\cite{BriecHorvath04} is also in the setting
of generalized convexity. Develin and Sturmfels~\cite{DS04}
pointed out some remarkable relations with tropical geometry, and 
developed a new approach, thinking of tropical polyhedra
as polyhedral complexes in the usual sense. This was the starting
point of several works of the same authors, of Joswig~\cite{joswig04}
and of Block and Yu~\cite{blockyu06}.
Some of the previously mentioned researchers, and some other ones,
including Allamigeon, Butkovi\v{c}, Goubault, Katz, Nitica, Meunier, Sergeev, Schneider, have recently made a number of works in the field, we refer
the reader to~\cite{AGK09,BSS,relmics,gk07,NiticaSinger07a,JSY07,joswig-2008,katz08,GM08,goubault} for a representative set of contributions.

A closed convex set can be represented classically in two different ways,
either internally, in terms of extreme elements
(extreme points and rays) and lineality space, or externally,
as the intersection of (closed) half-spaces.

The max-plus or tropical analogue of the external representation,
specially in the case of polyhedra, is the main object of this paper.

The existence of an external representation relies
on separation arguments. In the max-plus setting, several
separations theorems have been obtained, with various degrees of generality
and precision, by Zimmermann~\cite{zimmerman77}, by Samborski{\u\i} and Shpiz~\cite{shpiz}, by Cohen, Gaubert, and Quadrat~\cite{cgq00,cgq02} with a further
refinement in a work with Singer~\cite{cgqs04}, and by Develin and Sturmfels~\cite{DS04}. In particular,
the results of~\cite{cgq00,cgq02,cgqs04}
yield a simple geometric construction of the separating
half-space, showing the analogy with the Hilbert space case. This
geometric approach was extended to the case of the separation of several
convex sets by Gaubert and Sergeev~\cite{gauser}, using a cyclic projection
method. Briec and Horvath derived a separation theorem
for two convex sets using a different approach~\cite{briechorvath08a}.

The existence of an internal representation relies on
Krein-Milman type theorems. Results of this kind were
established by Butkovi\v{c}, Schneider, and Sergeev~\cite{BSS}
and by the authors~\cite{gk07}, who also studied in~\cite{katz08}
the analogue of the polar of a convex
set, which consists of the set of inequalities satisfied
by its elements.

Polyhedra are usually defined by the condition that
they have a finite external or internal representation,
the equivalence of both conditions being the classical
Minkowski-Weyl theorem.

In the max-plus setting, a first result of this nature
was established by Gaubert in~\cite[Ch.~III, Th.~1.2.2]{gaubert92a},
who showed that a finitely generated
max-plus cone can be characterized by finitely many max-plus linear inequalities.
One element of the proof is an argument showing that the set
of solutions of a system of max-plus linear equations is finitely generated,
an observation which was already made by Butkovi\v{c} and Hegedus~\cite{butkovicH}. Some accounts in English of the result of~\cite{gaubert92a}
appeared in~\cite{maxplus97,abg05,relmics}. 

To address the same issue, 
Joswig~\cite{joswig04} introduced the very interesting notion
of minimal half-spaces (tropical half-spaces
that are minimal for inclusion among the ones containing a given tropical
polytope), he stated that the apex of such a tropical half-space
is a vertex of the classical polyhedral complex arising from the polytope,
and deduced that there are only finitely many such minimal half-spaces.
This statement was refined by a conjecture of Block and Yu, 
characterizing the minimal half-spaces, in the generic
case~\cite[Conj.~14]{blockyu06}. 

The finiteness of the
number of minimal half-spaces is an appealing property,
which is geometrically quite obvious in dimension 2.
It came to us as a surprise that it does not hold
in higher dimensions.
We give here a counter example (Example~\ref{Ejemplo1} below),
contradicting the finiteness of the number of minimal half-spaces
containing a tropical polyhedron
(Corollary~3.4 of~\cite{joswig04}) and disproving Conjecture~14 of~\cite{blockyu06}.
The analysis of the present counter example is based on
a general characterization of the minimal half-spaces, Theorem~\ref{TheoCharacMin} below, the main result of this paper, which gives some answer
to the question at the origin of the conjecture of Block and Yu.

In a preliminary section, we establish an analogue
of the Minkowski-Weyl theorem
(Theorem~\ref{th-mw}), showing that a tropical polyhedron can be
equivalently described either as the sum of the convex hull 
of finitely many points and of the cone generated by finitely
many vectors, or as the intersection of finitely many half-spaces
(there is no tropical analogue of the lineality space).
The proof is based on the idea of~\cite{gaubert92a}
(Theorem~\ref{MinkowskiWeylCones} below), which is combined
with the results of~\cite{gk07}.

In the final section, we characterize the extreme elements
of the polar of a tropical polyhedral cone. The set of extreme
elements of this polar has the property that any inequality satisfied
by all the elements of the cone is a max-plus linear combination of
the inequalities represented by these extreme elements,
and it is the unique minimal set with this property (up to a scaling).
In particular, these extreme elements provide a finite representation
of the original polyhedron as the intersection of half-spaces.
Theorem~\ref{TheoCharacExtreme} below characterizes these extreme elements,
showing in particular that each of them is determined by support vectors.

\section{The Tropical Minkowski-Weyl Theorem}\label{sec-tmw}
Let us first recall some basic definitions.
The max-plus semiring, $\rmax$, 
is the set $\R\cup\{-\infty\}$ equipped with the 
addition $(a,b)\mapsto \max(a,b)$ and the multiplication $(a,b)\mapsto
a+b$. To emphasize the semiring structure, we write $a\oplus b:=\max(a,b)$,
$ab:=a+b$, $\zero:=-\infty$ and $\unit:=0$.  The term ``tropical''
is now used essentially as a synonym of max-plus. 
The semiring operations are extended in 
the natural way to matrices over the max-plus semiring: 
$(A\oplus B)_{ij}:=A_{ij}\oplus B_{ij}$, 
$(AB)_{ij}:=\oplus_k A_{ik} B_{kj}$ and $(\lambda A)_{ij}:=\lambda A_{ij}$ 
for all $i,j$, 
where $A,B$ are matrices of compatible sizes and $\lambda \in \rmax$. 
We denote by $\uvector^k\in \rmax^n$ the $k$-th unit vector, 
i.e.\ the vector defined by: $(\uvector^k)_k:=\unit $ and 
$(\uvector^k)_h:=\zero $ if $h \neq k$. 

We consider $\rmax $ equipped with the 
usual topology (resp. order), which can be defined by the metric: 
$d(a,b):=|\exp(a)-\exp(b)|$. The set $\rmax^n$ is equipped with 
the product topology (resp. order). 
Note that the semiring operations are continuous with 
respect to this topology.  

A subset $\sV$ of $\rmax^n$ is said to be a 
\new{max-plus or tropical cone} if it is 
stable by max-plus linear combinations, meaning that
\begin{align}
\lambda u\oplus \mu v \in \sV
\label{e-stable}
\end{align}
for all $u,v\in \sV$ and $\lambda,\mu \in \rmax$. Note that, 
in the max-plus setting, positivity constraints are implicit because 
any scalar $\lambda \in \rmax$ satisfies $\lambda \geq \zero$.  
As a consequence, max-plus cones turn out to share many 
properties with classical convex cones. 
This analogy leads to define \new{max-plus convex subsets} $\sC$ of 
$\rmax^n$ by requiring them to be stable by max-plus convex combinations, 
meaning that $\lambda u\oplus \mu v \in \sC$ holds for all $u,v\in \sC$
and $\lambda,\mu\in \rmax$ such that $\lambda\oplus \mu=\unit$. 
We denote by $\cone(\sX)$ the smallest cone containing a
subset $\sX$ of $\rmax^n$, and by $\co(\sX)$ the smallest  
convex set containing it. Therefore, $\cone(\sX)$ (resp. $\co(\sX)$) 
is the set of all max-plus linear (resp. convex) 
combinations of finitely many elements of $\sX$. 
A cone $\sV$ is said to be finitely generated 
if there exists a finite set $\sX$ such that $\sV=\cone(\sX)$, 
which equivalently means that $\sV=\set{Cw}{w\in \rmax^t}$ 
for some matrix $C\in \rmax^{n\times t}$. 

A \new{half-space} of $\rmax^n$ is a set of the form
\[
\sH=\set{x\in \rmax^n}{\oplus_{1\leq i\leq n} a_i x_i\leq \oplus_{1\leq j\leq n} b_j x_j} \; , 
\]
where $a,b\in \rmax^n$, and an \new{affine half-space} 
of $\rmax^n$ is a set of the form
\[
\sH=\set{x\in \rmax^n}{\left( \oplus_{1\leq i\leq n} a_i x_i\right) \oplus c\leq 
\left( \oplus_{1\leq j\leq n} b_j x_j\right) \oplus d} \; , 
\]
where $a,b\in \rmax^n$ and $c,d\in \rmax$. With the classical notation,
the latter set can be written as
\[
\sH=\big\{x\in \rmax^n\mid \max\big( \max_{1\leq i\leq n} a_i +x_i,c\big)\leq 
\max\big( \max_{1\leq j\leq n} b_j +x_j, d\big) \big\} \enspace .
\]
Note that half-spaces are max-plus cones. 

Classical polyhedra can be defined either 
as a finite intersection of affine half-spaces, 
or in terms of finite sets of vertices and rays, i.e.\ 
as the Minkowski sum of a polytope and a finitely generated cone. 
Here we adopt the first approach and define a 
\new{max-plus or tropical polyhedron} 
as the intersection of finitely many affine half-spaces. 
We warn the reader that our notion of polyhedra is more general than 
the one used in~\cite{DS04} (the latter reference deals with max-plus
cones having a finite generating family consisting of vectors
with finite entries). 

The following ``conic'' form of the Minkowski-Weyl theorem
is equivalent to a result established in~\cite{gaubert92a},
showing that a finitely generated max-plus cone
is characterized by finitely many max-plus linear equalities. This result
was reproduced (but without its proof) in the 
surveys~\cite{maxplus97,abg05,relmics}.
For the convenience of the reader, we include the proof here.
The ``if'' part is equivalent to the existence of a finite
set of generators of a system of max-plus linear equations, which was
first shown in~\cite{butkovicH}. There have been recently
progresses on these issues, leading to a faster algorithm,
see~\cite{goubault,AGG10}.

\begin{theorem}[Compare with~{\cite[Ch.~III, Th.~1.2.2]{gaubert92a}} and~{\cite[Th.~9]{maxplus97}}]\label{MinkowskiWeylCones}
A max-plus cone is finitely generated if, and only if, it is the intersection 
of finitely many half-spaces. 
\end{theorem}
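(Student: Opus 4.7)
My plan is to prove both implications, using the Butkovi\v{c}--Hegedus finiteness result (on generating sets of solutions of systems of max-plus linear equations, cited just before the theorem) as the main tool.

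For the ``if'' direction, I observe that any half-space $\sH = \set{x \in \rmax^n}{\oplus_i a_i x_i \leq \oplus_j b_j x_j}$ can be rewritten as a single max-plus linear \emph{equation}: exploiting $\max(u,v) = v \Leftrightarrow u \leq v$, the defining inequality is equivalent to $\oplus_i (a_i \oplus b_i) x_i = \oplus_j b_j x_j$. Consequently, a finite intersection of half-spaces is the solution set of a finite system $Ax = Bx$, which is finitely generated by the Butkovi\v{c}--Hegedus theorem.

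For the ``only if'' direction, let $\sV = \cone\{v^1, \dots, v^t\}$. I would introduce its \emph{polar}
\[
\sV^\circ := \set{(a,b) \in \rmax^{2n}}{\oplus_i a_i v^k_i \leq \oplus_i b_i v^k_i,\ k = 1, \dots, t},
\]
which is a finite intersection of $t$ half-spaces in $\rmax^{2n}$, one per generator of $\sV$. By the ``if'' direction just established, $\sV^\circ$ is finitely generated; fix a generating family $\{(a^s, b^s)\}_{s=1}^r$. I then claim
\[
\sV = \sW := \bigcap_{s=1}^r \set{x \in \rmax^n}{\oplus_i a^s_i x_i \leq \oplus_i b^s_i x_i}.
\]
The inclusion $\sV \subseteq \sW$ is immediate since each $(a^s,b^s) \in \sV^\circ$. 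For $\sW \subseteq \sV$, suppose $x \in \sW \setminus \sV$. Invoking the tropical separation theorem from~\cite{cgq00,cgq02,cgqs04} (valid because a finitely generated max-plus cone is closed), one obtains $(a,b) \in \sV^\circ$ with $\oplus_i a_i x_i > \oplus_i b_i x_i$. Writing $a = \oplus_s \lambda_s a^s$ and $b = \oplus_s \lambda_s b^s$ for suitable $\lambda_s \in \rmax$, the distributivity of the max-plus operations gives $\oplus_i a_i x_i = \oplus_s \lambda_s (\oplus_i a^s_i x_i) \leq \oplus_s \lambda_s (\oplus_i b^s_i x_i) = \oplus_i b_i x_i$, using the assumption $x \in \sW$ at each $s$. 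This contradicts the separation, so no such $x$ exists.

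The step that needs the most care is the appeal to separation, which presupposes that a finitely generated max-plus cone is closed. Although standard, this should be verified: one may normalize the generators and argue by compactness, or observe that $\im C = \set{x \in \rmax^n}{C C^\sharp x = x}$, where $C^\sharp$ denotes the residual of $C$, exhibiting $\im C$ as the equalizer of two continuous maps on $\rmax^n$. Once closedness is secured, the cited separation results apply verbatim and the argument closes.
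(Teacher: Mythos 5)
Your proof is correct, and the two halves compare differently with the paper's argument. The ``only if'' direction is essentially the paper's: both proofs note that the set of pairs $(a,b)$ defining half-spaces that contain $\sV=\cone\{v^1,\dots,v^t\}$ is itself a finite intersection of half-spaces in $\rmax^{2n}$ (the paper writes it as $\set{(a,b)}{aC\leq bC}$ for the matrix $C$ of generators), apply the ``if'' half to obtain a finite generating family of this polar, and then use closedness of finitely generated cones plus the separation theorem to conclude that the half-spaces attached to those generators already cut out $\sV$; your contradiction via distributivity is just an unwinding of the paper's final sentence. Where you genuinely diverge is the ``if'' direction. The paper proves it self-containedly by induction on the number $p$ of half-spaces: for $p=1$ it splits the half-space into the union of cones $\sV_j$ and exhibits explicit generators such as $b_j\uvector^i\oplus a_i\uvector^j$, and for the inductive step it pulls the $(p+1)$-st inequality back through an image representation $\set{Cw}{w\in\rmax^t}$ of the first $p$. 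You instead rewrite each inequality $\oplus_i a_ix_i\leq\oplus_j b_jx_j$ as the equation $\oplus_i(a_i\oplus b_i)x_i=\oplus_j b_jx_j$ and invoke the Butkovi\v{c}--Hegedus theorem on finite generation of the solution set of $Ax=Bx$. That reduction is valid --- the paper itself states the ``if'' part is equivalent to that result --- but note that you have relocated rather than reproved the combinatorial core: all the work sits inside the citation, whereas the paper's induction is what one would use to prove the cited statement in the first place. A minor caution on your closedness aside: the identity $\im C=\set{x}{CC^\sharp x=x}$ requires care when entries equal $\zero$ (continuity of the residuated composition is not automatic there), so the normalization-and-compactness route, or simply the references the paper cites, is the safer justification.
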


\begin{proof}
Let $\sV\subset \rmax^n$ be an intersection of $p$ half-spaces. 
We next prove that $\sV $ is finitely generated by induction on $p$. 

When $p=1$, as $\sV=\set{x\in \rmax^n}{\oplus_{1\leq i\leq n} a_i x_i\leq \oplus_{1\leq j\leq n} b_j x_j}= 
\cup_{1\leq j\leq n}\sV_j$, where 
\[ 
\sV_j:=\set{x\in \rmax^n}{a_i x_i\leq b_j x_j,\; \forall i=1,\ldots ,n} \; ,
\] 
to prove that $\sV$ is finitely generated it suffices to show that the cones 
$\sV_j$ are all finitely generated. If $b_j\not =\zero$ and $a_j\leq b_j$, 
then it can be checked that $\sV_j=\cone(\sX_j)$, where 
$\sX_j:=\set{b_j \uvector^i\oplus a_i \uvector^j}{i=1,\ldots ,n}$.  
If $b_j=\zero$ or $a_j>b_j$, then $\sV_j=\cone(\sX_j)$, where 
$\sX_j:=\set{\uvector^i}{a_i=\zero }$. 

Assume now that the intersection of $p$ half-spaces 
is finitely generated and let 
\[
\sV:=\set{x\in \rmax^n}{A x\leq B x}\cap \set{x\in \rmax^n}{a x\leq b x} \; ,
\] 
where $A,B\in \rmax^{p\times n}$ and $a,b\in \rmax^{1\times n}$, 
be an intersection of $p+1$ half-spaces. 
Then, we know that there exists a matrix 
$C\in \rmax^{n\times t}$, for some $t\in \N$, such that 
$\set{x\in \rmax^n}{A x\leq B x}=\set{Cw}{w\in \rmax^t}$. 
As $\sH:=\set{w\in \rmax^t}{aCw\leq bCw}$ is a half-space, 
there exists another matrix $D\in \rmax^{t\times r}$, for some 
$r\in \N$, such that $\sH=\set{Du}{u\in \rmax^r}$. Therefore, 
$\sV= \set{Cw}{aCw\leq bCw}= \set{CDu}{u\in \rmax^r}$ 
is finitely generated.   

Conversely, let $\sV=\set{Cw}{w\in \rmax^t}$, where $C\in \rmax^{n\times t}$, 
be a finitely generated cone. Then, as finitely generated cones are closed 
(see~\cite[Cor.~27]{BSS} or~\cite[Lemma~2.20]{gk07}), it follows from the separation theorem for closed 
cones of~\cite{zimmerman77,shpiz,cgqs04} 
that $\sV$ is the intersection of the half-spaces of $\rmax^n$ 
in which it is contained. Note that a half-space 
$\set{x\in \rmax^n}{a x\leq b x}$ contains $\sV$ if, and only if, 
the row vectors $a$ and $b$ satisfy $aC\leq bC$. 
Since $\set{(a,b)\in \rmax^{1\times 2n}}{aC\leq bC}$ 
is a finite intersection of half-spaces, 
we know by the first part of the proof that there exist matrices 
$A$ and $B$ such that $(a,b)$ satisfies $aC\leq bC$ if, and only if, $(a,b)$ 
is a max-plus linear combination of the rows of the matrix $(A,B)$. 
Therefore, $\sV=\set{x\in \rmax^n}{A x\leq B x}$, i.e.\ 
$\sV$ is an intersection of finitely many half-spaces. 
\end{proof}
Recall that the 
{\em recession cone}~\cite{gk07} of a max-plus convex set $\sC$
consists of the vectors $u$ for which there exists a vector $x\in \sC$ such that
$x\oplus \lambda u\in \sC$ for all $\lambda \in \rmax$. This property is known
to be independent of the choice of $x\in \sC$ as soon as $\sC$ is closed.

Given a max-plus cone $\sV\subset \rmax^n$, a non-zero vector $v \in \sV$ 
is said to be an {\em extreme vector} of $\sV$ 
if the following property is satisfied 
\[
v=u\oplus w,\;u,w\in \sV  \implies v=u \mbox { or } v=w \enspace . 
\] 
The set of scalar multiples of $v$ is an {\em extreme ray} of $\sV$.
Given a max-plus convex set $\sC\subset \rmax^n$, a vector $v\in \sC$
is said to be an {\em extreme point} of $\sC$ if 
\[
v=\lambda u\oplus \mu w,\;u,w\in \sC, \; \lambda,\mu\in \rmax,
\;
\lambda\oplus \mu=\unit  \implies v=u \mbox { or } v=w \enspace . 
\] 

As a corollary of Theorem~\ref{MinkowskiWeylCones} 
we obtain a max-plus analogue of the Minkowski-Weyl theorem,
the first part of which was announced in~\cite{relmics}. 
A picture illustrating the decomposition can be found in~\cite{relmics,gk07}. 
  
\begin{theorem}[Tropical Minkowski-Weyl Theorem]
\label{th-mw}
The max-plus polyhedra are precisely the sets of the form
\[
\co(\sZ)\oplus \cone(\sY) 
\]
where $\sZ,\sY$ are finite sets. 
The set $\cone(\sY)$ in such a representation is unique,
it coincides with the recession cone of the polyhedron.
Any minimal set $\sY$ in such a representation 
can be obtained by selecting precisely one
non-zero vector in each extreme ray of the recession cone
of the polyhedron. 
The minimal set $\sZ$ in such a representation consists of the extreme
points of the polyhedron.
\end{theorem}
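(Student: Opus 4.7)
The plan is to reduce the affine statement to the conic Minkowski-Weyl theorem (Theorem~\ref{MinkowskiWeylCones}) via a homogenization that lifts the polyhedron into one extra dimension. Given a polyhedron $P \subset \rmax^n$ cut out by the affine half-spaces
\[
\sH_k = \set{x\in\rmax^n}{(\oplus_i a^k_i x_i)\oplus c^k \leq (\oplus_i b^k_i x_i)\oplus d^k}, \quad k=1,\dots,p,
\]
I form the cone
\[
\widehat P := \set{(x,t)\in \rmax^{n+1}}{(\oplus_i a^k_i x_i)\oplus c^k t \leq (\oplus_i b^k_i x_i)\oplus d^k t,\; \forall k},
\]
which is an intersection of $p$ (linear) half-spaces of $\rmax^{n+1}$, and satisfies $P=\set{x\in\rmax^n}{(x,\unit)\in \widehat P}$. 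By Theorem~\ref{MinkowskiWeylCones}, $\widehat P=\cone(G)$ for some finite $G\subset \rmax^{n+1}$. I then split $G=G^1\sqcup G^0$ according to whether the last coordinate is nonzero or $\zero$, rescale each $g\in G^1$ so that its last coordinate equals $\unit$, and let $\sZ$ and $\sY$ denote the projections of $G^1$ and $G^0$ onto $\rmax^n$, respectively. Since the last coordinate of $\bigoplus_j\lambda_j g_j$ equals $\unit$ if and only if $\bigoplus_{g_j\in G^1}\lambda_j=\unit$, expanding the condition $(x,\unit)\in\cone(G)$ yields $x\in \co(\sZ)\oplus\cone(\sY)$, and conversely, so $P=\co(\sZ)\oplus\cone(\sY)$.

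Conversely, given any finite representation $P=\co(\sZ)\oplus\cone(\sY)$, I homogenize in the opposite direction: $\widehat P:=\cone\big(\{(z,\unit):z\in\sZ\}\cup\{(y,\zero):y\in\sY\}\big)$ is a finitely generated tropical cone in $\rmax^{n+1}$, hence by Theorem~\ref{MinkowskiWeylCones} it is a finite intersection of half-spaces; specializing $t=\unit$ in each inequality turns it into an affine half-space and exhibits $P$ as a polyhedron. To identify $\cone(\sY)$ with $\Rec(P)$ (and thus obtain uniqueness), I would prove $\Rec(P)=\set{y\in\rmax^n}{(y,\zero)\in\widehat P}$. One inclusion is immediate from $(x,\unit)\oplus\lambda(y,\zero)=(x\oplus\lambda y,\unit)$. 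For the other, fix $y\in\Rec(P)$ and $x_0\in P$: then $(\lambda^{-1}x_0\oplus y,\lambda^{-1})\in\widehat P$ for every finite $\lambda$, and letting $\lambda\to+\infty$, using that $\widehat P$ is closed because finitely generated (as in \cite[Cor.~27]{BSS}, \cite[Lemma~2.20]{gk07}), forces $(y,\zero)\in\widehat P$. Combined with the explicit generators of $\widehat P$---any generator in $G^1$ must carry coefficient $\zero$ whenever the last coordinate of the combination is $\zero$---this yields $\Rec(P)=\cone(\sY)$.

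Finally, the minimality statements reduce to tropical Krein-Milman-type results already in the literature: a finitely generated tropical cone is generated by its extreme rays and each extreme ray meets every generating family \cite{BSS,gk07}, so a minimal $\sY$ is precisely a system of representatives for the extreme rays of $\Rec(P)$; the analogous statement for max-plus convex sets \cite{gk07} identifies a minimal $\sZ$ with the set of extreme points of $P$. I expect the main obstacle to be the homogenization bookkeeping itself---checking that the correspondence $(x,\unit)\leftrightarrow x$ transports affine half-spaces to linear ones and convex hulls/cones correctly, and invoking closedness at the right moment in the recession-cone argument---while the remaining work amounts to a careful but routine unpacking of definitions.
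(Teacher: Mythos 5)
Your proposal is correct and follows essentially the same route as the paper: homogenize into $\rmax^{n+1}$, apply Theorem~\ref{MinkowskiWeylCones} to the lifted cone, split the finite generating set according to whether the last coordinate is $\unit$ or $\zero$ to read off $\sZ$ and $\sY$, and invoke the Minkowski-type theorem of~\cite{gk07} for the minimality statements. The only cosmetic difference is in identifying the recession cone with $\cone(\sY)$: you take a limit of points $(\lambda^{-1}x_0\oplus y,\lambda^{-1})$ inside the closed lifted cone, whereas the paper passes to the limit directly in the defining inequalities to obtain $Au\leq Bu$ and then concludes $(u,\zero)\in\widehat P$; both arguments are sound and rest on the same underlying facts.
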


Here, $\oplus $ denotes the max-plus Minkowski sum of two subsets, 
which is defined as the set of max-plus sums of a vector 
of the first set and a vector of the second one.

\begin{proof}
Let $\sC\subset \rmax^n$ be a max-plus polyhedron. Then, 
there exist matrices $A,B$ and column vectors $c,d$ such that 
$\sC=\set{x\in \rmax^n}{A x\oplus c\leq B x\oplus d}$. 
Consider the max-plus cone 
\[
\sV:=\set{{x\choose \lambda} \in \rmax^{n+1}}
{A x\oplus c\lambda \leq B x\oplus d\lambda } \; .
\] 
Since $\sV$ is an intersection of finitely many half-spaces, 
by Theorem~\ref{MinkowskiWeylCones} it follows that $\sV=\cone(\sX)$, 
for some finite subset $\sX$ of $\rmax^{n+1}$. 
Note that we can assume, without loss of generality, that 
\begin{eqnarray}\label{DescZXY}
\sX=\set{
\begin{pmatrix}
z \cr 
\unit
\end{pmatrix} 
\in \rmax^{n+1}}{z\in \sZ} \cup
\set{
\begin{pmatrix}
y \cr 
\zero
\end{pmatrix} 
\in \rmax^{n+1}}{y\in \sY}
\end{eqnarray}
for some finite subsets $\sZ,\sY$ of $\rmax^n$. Therefore, we have
\begin{eqnarray*}
x\in \sC \iff 
\begin{pmatrix}
x \cr 
\unit
\end{pmatrix}
\in \sV \iff 
\begin{pmatrix}
x \cr 
\unit
\end{pmatrix}=
\left( \oplus_{z\in \sZ} \lambda_z 
\begin{pmatrix}
z \cr 
\unit
\end{pmatrix}
\right) \oplus \left( \oplus_{y\in \sY} \lambda_y 
\begin{pmatrix}
y \cr 
\zero
\end{pmatrix} 
\right) \iff \\ 
x=\left( \oplus_{z\in \sZ} \lambda_z z\right) \oplus 
\left( \oplus_{y\in \sY} \lambda_y y\right) , \enspace 
\oplus_{z\in \sZ} \lambda_z =\unit \iff 
x\in \co(\sZ) \oplus \cone(\sY) \enspace ,
\end{eqnarray*}
which shows that $\sC=\co(\sZ) \oplus \cone(\sY)$. 

Conversely, let $\sC=\co(\sZ) \oplus \cone(\sY)$, 
where $\sZ,\sY$ are finite subsets of $\rmax^n$. 
Note that $x$ belongs to $\sC$ if, and only if, 
${x\choose \unit }$ belongs to $\sV:=\cone (\sX)$, where $\sX$ 
is the finite subset of $\rmax^{n+1}$ defined in~\eqref{DescZXY}.  
Since $\sV$ is a finitely generated cone, 
we know by Theorem~\ref{MinkowskiWeylCones} 
that there exist matrices $A,B$ and column vectors $c,d$ such that 
$\sV=\set{{x\choose \lambda}\in \rmax^{n+1}}
{A x\oplus c\lambda \leq B x\oplus d\lambda }$. Therefore, 
$\sC=\set{x\in \rmax^n}{A x\oplus c\leq B x\oplus d}$, 
i.e.\ $\sC$ is a max-plus polyhedron.

Now let $\sC=\co(\sZ) \oplus \cone(\sY)$ 
be a max-plus polyhedron. From the definition of recession cones, 
it readily follows that $\cone(\sY)$ 
is contained in the recession cone of $\sC$. 
Assume that $u$ is a vector in the recession cone of $\sC$. 
By the first part of the proof, if we define $\sV:=\cone (\sX)$, 
where $\sX$ is the finite subset of $\rmax^{n+1}$ defined in~\eqref{DescZXY}, 
then there exist matrices $A,B$ and column vectors $c,d$ such that 
$\sV= \set{{x\choose \lambda}\in \rmax^{n+1}}
{A x\oplus c\lambda \leq B x\oplus d\lambda }$ 
and $\sC=\set{x\in \rmax^n}{A x\oplus c\leq B x\oplus d}$. 
Since $u$ is in the recession cone of $\sC$, 
there exists $x\in \sC$ such that $x\oplus \lambda u\in \sC$ 
for all $\lambda \in \rmax$. This means that 
$A (x\oplus \lambda u)\oplus c\leq B (x\oplus \lambda u) \oplus d$
for all $\lambda \in \rmax$, so we conclude that $Au\leq Bu$. 
Therefore, ${u\choose \zero } \in \sV = \cone (\sX)$, 
which implies that $u\in \cone (\sY)$ by the definition of $\sX$.  
In consequence, $\cone (\sY)$ is equal to the recession cone of $\sC$. 

Assume that $z$ is an extreme point of $\sC$. We next show that necessarily 
$z\in\sZ$. To this end, by the definition of extreme points, 
it suffices to show that $z\in \co(\sZ)$. By the contrary, 
assume that $z=x\oplus u$, where $x\in \co(\sZ)$, $u\in \cone (\sY)$ 
and $x_i < u_i$ for some $i\in \{1,\ldots ,n\}$. Then, given 
any non-zero scalar $\lambda < \unit$, we have 
$z=(x\oplus \lambda u)\oplus \lambda (x\oplus (-\lambda )u)$, 
which contradicts the fact that $z$ is an extreme point of $\sC$ 
because $x\oplus \lambda u$ and $x\oplus (-\lambda )u$ 
are two elements of $\sC$ different from $z$. 
Therefore, any extreme point of $\sC$ must belong to $\sZ$. 

Now let $y$ be an extreme vector of the recession cone of $\sC$. 
Since the recession cone of $\sC$ is equal to $\cone (\sY)$, 
by the definition of extreme vectors, 
it follows that a non-zero scalar multiple of $y$ must belong to $\sY$. 

Finally, since $\sC$ is closed because it is a finite intersection of closed sets, 
from Theorem~3.3 of~\cite{gk07} it follows that any minimal 
set $\sY$ in the representation $\sC=\co(\sZ) \oplus \cone(\sY)$ 
can be obtained by selecting precisely one non-zero vector in each extreme ray 
of the recession cone of $\sC$, and a minimal set $\sZ$ in this representation 
is given by the extreme points of $\sC$.   
\end{proof}

\section{The partially ordered set of half-spaces}

In this section we prove the existence of minimal half-spaces 
with respect to a max-plus cone $\sV$. With this aim, it is convenient 
to start with the following lemma which shows that any half-space 
$\sH$ of $\rmax^n$ can be written as 
\[
\sH=\set{x\in \rmax^n}{\oplus_{i\in I} a_i x_i\leq \oplus_{j\in J} a_j x_j} \; , 
\]
where $I$ and $J$ are disjoint subsets of $\{ 1, \ldots ,n \}$ 
and $a_k\in \R$ for all $k\in I\cup J$. Henceforth, 
all the half-spaces we consider will be written in this way. 

\begin{lemma}
Let $a,b,c,d\in \rmax$. Then, 
\[ 
\set{x\in \rmax}{a x \oplus c \leq b x \oplus d}=
\set{x\in \rmax}{a x \oplus c \leq d}
\]
if $a>b$, and 
\[ 
\set{x\in \rmax}{a x \oplus c \leq b x \oplus d}=
\set{x\in \rmax}{c \leq b x \oplus d}
\]
if $a\leq b$.
\end{lemma}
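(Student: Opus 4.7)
The statement asserts two set equalities, each of which splits into two inclusions; one direction in each case is essentially a triviality, so the plan is to isolate the easy inclusions first and then concentrate on the one genuine content.

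For the easy inclusions: in the case $a\leq b$, the inclusion $\set{x}{ax\oplus c\leq bx\oplus d}\subset \set{x}{c\leq bx\oplus d}$ follows immediately from $c\leq ax\oplus c$; in the case $a>b$, the inclusion $\set{x}{ax\oplus c\leq d}\subset \set{x}{ax\oplus c\leq bx\oplus d}$ follows immediately from $d\leq bx\oplus d$. Both rely only on the fact that the $\oplus$ operation is the maximum.

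The case $a\leq b$ is completely elementary: it suffices to observe that for every $x\in\rmax$ one has $ax\leq bx$, which holds trivially when $x=\zero$ (both sides equal $\zero$) and which is just $a+x\leq b+x$ otherwise. Hence, assuming $c\leq bx\oplus d$, we get $ax\leq bx\leq bx\oplus d$ and $c\leq bx\oplus d$, so $ax\oplus c\leq bx\oplus d$, giving the reverse inclusion.

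The only nontrivial part is the inclusion $\subset$ in the case $a>b$, which I would handle by case analysis on $x$. If $x=\zero$ then $ax=bx=\zero$ and the hypothesis $ax\oplus c\leq bx\oplus d$ reduces to $c\leq d$, whence $ax\oplus c=c\leq d$. Otherwise $x\in\R$ and the strict inequality $a>b$ yields $ax>bx$; combined with $ax\leq bx\oplus d=\max(bx,d)$, this forces $ax\leq d$. It remains to show $c\leq d$: if instead $c>d$, then $c\leq bx\oplus d$ would force $c\leq bx$, so $bx\geq c>d\geq ax$, contradicting $ax>bx$. Hence $c\leq d$ and therefore $ax\oplus c\leq d$.

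The conceptually delicate point in this plan is precisely the step where one must conclude $c\leq d$ (and not just the weaker $c\leq bx\oplus d$) in the case $a>b$; the whole argument hinges on exploiting the strict inequality $ax>bx$ valid for $x\in\R$ to rule out the possibility that $c$ be absorbed by $bx$ on the right-hand side. Once this case analysis is laid out, the rest is a bookkeeping of trivial monotonicity inequalities.
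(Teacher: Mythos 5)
Your proof is correct and follows essentially the same route as the paper: the case $a\leq b$ and the easy inclusions are dispatched by monotonicity of $\oplus$, and the substantive inclusion for $a>b$ is handled by splitting on $x=\zero$ versus $x\neq\zero$ and exploiting the strict inequality $ax>bx$. The only difference is cosmetic: the paper applies the argument to the quantity $ax\oplus c$ as a whole (from $ax\oplus c\geq ax>bx$ and $ax\oplus c\leq bx\oplus d$ it concludes $ax\oplus c\leq d$ in one step), which spares the separate contradiction argument you use to establish $c\leq d$.
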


\begin{proof}
We only prove the case $a> b$ because the other one is straightforward. 

Assume that $a x \oplus c \leq b x \oplus d$. 
If $x=\zero$, necessarily $c\leq d$ 
and thus $a x \oplus c = c \leq d$. 
If $x \neq \zero$, then, 
as $a x \oplus c \geq a x > b x$ and 
$a x \oplus c \leq b x \oplus d$, 
it follows that $a x \oplus c \leq d$.  

Conversely, assume that $a x \oplus c \leq d$. Then,   
we have $a x \oplus c\leq d \leq b x \oplus d$. 
\end{proof}

Given $v\in \rmax^n$ and $\sV\subset \rmax^n$, the  
\new{supports} of $v$ and $\sV$ are respectively defined by
\[ 
\supp v:=\set{k}{v_k\neq \zero}
\enspace \mbox{ and } \enspace 
\supp \sV:=\cup_{v\in \sV}\supp v
\enspace . 
\]
We shall say that a max-plus cone $\sV\subset \rmax^n$ 
has \new{full support} if $\supp \sV=\{1,\dots ,n\}$. 
For any non-zero scalar $\lambda \in \rmax$, 
we define $\myminus{\lambda }:=-\lambda$,  
and we extend this notation to vectors of $\rmax^n$ with only finite entries, 
so that $\myminus{x}$ represents the vector with entries $-x_i$ 
for $i\in \{1,\dots ,n\}$.   
 
\begin{lemma}\label{LemmaInclu}
Let 
\[
\sH:=\set{x\in \rmax^n}{\oplus_{i\in I} a_i x_i\leq \oplus_{j\in J} a_j x_j} \;  
\]
and 
\[
\sH':=\set{x\in \rmax^n}{\oplus_{i\in I'} b_i x_i\leq \oplus_{j\in J'} b_j x_j}   
\]
be two half-spaces. 
Then, when $I\neq \emptyset$, $\sH' \subset \sH$ 
if, and only if, $I\subset I'$, $J'\subset J$ and 
$b_j \myminus{(b_i)} \leq a_j \myminus{(a_i)}$ for all $i\in I$ and $j\in J'$.
\end{lemma}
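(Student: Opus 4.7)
The equivalence splits into three pointwise conditions on the indices and coefficients, so the plan is to prove each direction by handling the three conditions separately. The ``if'' direction is a one-line algebraic manipulation of the defining inequality of $\sH'$. The ``only if'' direction proceeds by contrapositive: for each of the three conditions, I exhibit a test vector in $\sH' \setminus \sH$ whenever that condition fails. The hypothesis $I \neq \emptyset$ enters only in the converse, to supply a ``reference'' index $i_0 \in I$ used in two of the three tests.

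For the ``if'' direction, assume the three conditions hold and pick $x \in \sH'$. For any $i \in I \subset I'$, the defining inequality of $\sH'$ yields $b_i x_i \le \oplus_{j \in J'} b_j x_j$. Multiplying by $\myminus{b_i}$ (which is finite by our standing convention) gives $x_i \le \oplus_{j \in J'} b_j \myminus{b_i} x_j$; then using $b_j \myminus{b_i} \le a_j \myminus{a_i}$ gives $x_i \le \oplus_{j \in J'} a_j \myminus{a_i} x_j$; finally, multiplying by $a_i$ and using $J' \subset J$ yields $a_i x_i \le \oplus_{j \in J} a_j x_j$. Taking the supremum over $i \in I$ shows $x \in \sH$.

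For the ``only if'' direction, assume $\sH' \subset \sH$. If $i_0 \in I \setminus I'$, the test vector $x = \uvector^{i_0}$ lies in $\sH'$ (its $\sH'$-LHS is $\zero$ since $i_0 \notin I'$), yet fails the $\sH$-inequality, whose LHS equals $a_{i_0} \neq \zero$ while its RHS is $\zero$ because $i_0 \in I$ and $I \cap J = \emptyset$. If $j_0 \in J' \setminus J$, pick any $i_0 \in I$, so $i_0 \in I'$ by the previous step and $i_0 \neq j_0$ since $I' \cap J' = \emptyset$; the test vector $x = (b_{j_0} \myminus{b_{i_0}}) \uvector^{i_0} \oplus \uvector^{j_0}$ is calibrated to saturate the $\sH'$-inequality (both sides equal $b_{j_0}$), while the $\sH$-LHS contains the finite term $a_{i_0}(b_{j_0} \myminus{b_{i_0}}) \ne \zero$ and the $\sH$-RHS is $\zero$ because neither $i_0$ nor $j_0$ belongs to $J$. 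Finally, fixing $i \in I$ and $j \in J'$ (so $i \in I'$, $j \in J$, and $i \neq j$), the test vector $x = \uvector^i \oplus (b_i \myminus{b_j}) \uvector^j$ again saturates $\sH'$, and reading the $\sH$-inequality on this $x$ gives $a_i \le a_j b_i \myminus{b_j}$, equivalent by rearrangement to $b_j \myminus{b_i} \le a_j \myminus{a_i}$.

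The main technical obstacle is the index bookkeeping: for each test vector one must check that the $\oplus$'s on both sides of each half-space inequality collapse to exactly one term, which rests on the disjointness $I \cap J = I' \cap J' = \emptyset$ and the careful placement of the test indices relative to $I, J, I', J'$. The other decisive ingredient is the ``saturation trick''—choosing each scalar multiplier so that the test vector lands on the boundary of $\sH'$—which is what forces the tight inequality between the $a$'s and $b$'s out of the inclusion $\sH' \subset \sH$.
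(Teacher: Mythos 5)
Your proof is correct and follows essentially the same route as the paper's: the same chain of inequalities for the ``if'' direction, and for the ``only if'' direction the same test vectors (yours are just scalar multiples of the paper's, normalized so one entry is $\unit$ rather than setting $x_k=\myminus{(b_k)}$ at both positions). The index bookkeeping you flag as the main obstacle is handled correctly, relying as the paper does on the standing convention that $I\cap J=I'\cap J'=\emptyset$ with real coefficients.
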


\begin{proof}

$\Rightarrow )$ Assume that $I\not \subset I'$. 
Pick any $i\in I\setminus I'$. Then, 
$\uvector^i \in \sH'$ and $\uvector^i \not \in \sH$, 
which contradicts the fact that $\sH' \subset \sH$. 
Therefore, $I\subset I'$. 

Now assume that $J'\not \subset J$. 
Pick any $j\in J'\setminus J$ and $i\in I\subset I'$, 
and define the vector $x\in \rmax^n$ by
\begin{equation}\label{DefVector}
x_k:=
\left\{
\begin{array}{cl}
\myminus{b_k} & \makebox{ if }\; k \in \{ i , j \} \; , \\
\zero & \makebox{ otherwise.} 
\end{array}
\right.
\end{equation}
Then, $x\in \sH'$ and $x \not \in \sH$, 
which is a contradiction. Therefore, $J'\subset J$. 

Finally, since the vector $x$ defined in~\eqref{DefVector} 
belongs to $\sH'$ for any 
$i\in I\subset I'$ and $j\in J'$, it follows that it also belongs to
$\sH$ and thus $a_i \myminus{(b_i)} \leq a_j \myminus{(b_j)}$. 
Therefore, $b_j \myminus{(b_i)} \leq a_j \myminus{(a_i)}$ for all 
$i\in I$ and $j\in J'$. 

$\Leftarrow )$ Since
\begin{eqnarray*}
&& x\in \sH' \implies b_i x_i \leq \oplus_{j\in J'} b_j x_j \; , \; 
\forall i\in I' \implies b_i x_i \leq \oplus_{j\in J'} b_j x_j \; , \; 
\forall i\in I  \implies \\   
&& x_i \leq \oplus_{j\in J'} b_j \myminus{(b_i)} x_j \; , \; \forall i\in I \implies
x_i \leq \oplus_{j\in J'}a_j \myminus{(a_i)} x_j \; , \; \forall i\in I \implies \\ 
&&  a_i x_i  \leq \oplus_{j\in J'} a_j x_j \; , \;  \forall i\in I \implies  
a_i x_i \leq \oplus_{j\in J} a_j x_j \; , \;  \forall i\in I \implies 
x\in \sH \; , 
\end{eqnarray*} 
it follows that $\sH' \subset \sH$.
\end{proof}

\begin{lemma}\label{LemmaInter}
Let $\sV\subset \rmax^n$ be a max-plus cone with full support 
and $\{ \sH_r \}_{r\in \N}$ 
be a decreasing sequence of half-spaces such that 
$\sV\subset \sH_r$ for all $r \in \N$. 
Then, there exists a half-space $\sH$ such that $\sH=\cap_{r\in \N} \sH_r\; .$
\end{lemma}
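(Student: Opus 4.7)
\emph{Sketch.} The plan is to put each $\sH_r$ in the canonical form described above, reduce to a tail on which the combinatorial type $(I_r,J_r)$ of the half-spaces is stable, extract a subsequential limit of the coefficients by compactness, and then identify the intersection with the limit half-space. Write $\sH_r=\set{x\in \rmax^n}{\oplus_{i\in I_r}a^r_i x_i\leq \oplus_{j\in J_r}a^r_j x_j}$, with $I_r,J_r$ disjoint subsets of $\{1,\dots,n\}$ and $a^r_k\in\R$ for $k\in I_r\cup J_r$. If $I_r=\emptyset$ then $\sH_r=\rmax^n$; if this holds for every $r$ the claim is immediate. Otherwise, since the sequence is decreasing, $\cap_r\sH_r$ is unchanged by discarding initial terms, so assume $I_r\neq\emptyset$ for every $r$. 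By Lemma~\ref{LemmaInclu}, $I_r\subset I_{r+1}$ and $J_{r+1}\subset J_r$; these chains stabilize in the finite set $\{1,\dots,n\}$, so for $r\geq r_0$ one has $I_r=I$ and $J_r=J$, and the same lemma shows that $\alpha^r_{ij}:=a^r_j-a^r_i$ is non-increasing in $r$ for every $(i,j)\in I\times J$.

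I then renormalize each $\sH_r$ by adding a common scalar to all its coefficients, which leaves the half-space unchanged, so that $\max_{k\in I\cup J}a^r_k=0$; hence $(a^r_k)_{k\in I\cup J}$ lies in the compact cube $[-\infty,0]^{|I|+|J|}$. Extracting a subsequence $(r_m)$, I may assume $a^{r_m}_k\to a_k\in[-\infty,0]$, with $\max_k a_k=0$. The main obstacle is to show $a_i\in\R$ for every $i\in I$; otherwise the limit is not directly expressible as a half-space with the expected support. The case $a_j=-\infty$ for all $j\in J$ is excluded using full support: pick $v\in\sV$ with every entry in $\R$ (which exists by combining via max-plus addition the vectors provided by $\supp\sV=\{1,\dots,n\}$). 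Then $v\in\sH_{r_m}$ gives
\[
\max_{i\in I}(a^{r_m}_i+v_i)\leq\max_{j\in J}(a^{r_m}_j+v_j);
\]
the right-hand side tends to $-\infty$, while the left-hand side is bounded below by $a^{r_m}_{i_0}+v_{i_0}$, which tends to $v_{i_0}>-\infty$, for any $i_0\in I$ with $a_{i_0}=0$---such an $i_0$ exists because $\max_k a_k=0$ cannot be attained in $J$. This contradiction forces some $a_j>-\infty$. If now $a_i=-\infty$ for some $i\in I$, then $\alpha^{r_m}_{ij}=a^{r_m}_j-a^{r_m}_i\to+\infty$, contradicting the monotone bound $\alpha^r_{ij}\leq\alpha^{r_0}_{ij}<\infty$. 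Hence $a_i\in\R$ for every $i\in I$.

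Finally, put $J'':=\{j\in J\mid a_j>-\infty\}$ and define
\[
\sH:=\set{x\in \rmax^n}{\oplus_{i\in I}a_i x_i\leq \oplus_{j\in J''}a_j x_j},
\]
a half-space with only finite coefficients on $I\cup J''$. To show $\sH=\cap_r\sH_r$: the inclusion $\cap_r\sH_r\subset\sH$ is obtained by passing to the limit along $(r_m)$ in the defining inequality of $\sH_{r_m}$, using continuity of finite $\max$ (terms with $j\in J\setminus J''$ drop out since $a_j+x_j=-\infty$). For the reverse inclusion $\sH\subset\sH_r$ with $r\geq r_0$, Lemma~\ref{LemmaInclu} applies: $I\subset I$ and $J''\subset J$ trivially, and the coefficient inequality $a_j-a_i\leq a^r_j-a^r_i$ for $i\in I$, $j\in J''$ is just $a_j-a_i=\lim_s\alpha^s_{ij}\leq\alpha^r_{ij}$, valid by monotonicity.
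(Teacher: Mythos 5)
Your proof is correct and follows essentially the same route as the paper's: canonical form, stabilization of the index sets $(I_r,J_r)$ via Lemma~\ref{LemmaInclu}, a normalization making the coefficients precompact, extraction of convergent subsequences, finiteness of the limits $a_i$ for $i\in I$ via full support combined with the monotonicity of the differences $a^r_j-a^r_i$, and the two inclusions proved by passing to the limit and by Lemma~\ref{LemmaInclu} respectively. The only differences are cosmetic (you normalize $\max_{k\in I\cup J}a^r_k=\unit$ rather than $\oplus_{j\in J}a^r_j=\unit$, and you argue the finiteness of the $a_i$ in the reverse order), and you are somewhat more explicit about discarding initial terms and stabilizing the index sets.
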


\begin{proof}
Assume that 
\[
\sH_r=\set{x\in \rmax^n}{\oplus_{i\in I_r} a^r_i x_i \leq \oplus_{j\in J_r} a^r_j x_j} \; ,
\]
where for all $r\in \N$, $I_r$ and $J_r$ are disjoint subsets of 
$\{ 1, \ldots ,n \}$ and $a^r_k\in \R$ for all $k\in I_r\cup J_r$. 
By Lemma~\ref{LemmaInclu} we may assume, without loss of generality, 
that there exist $I,J\subset \{1, \ldots ,n\}$  
such that $I_r=I$ and $J_r=J$ for all $r\in \N$. 
If $I=\emptyset$, we have $\sH_r=\rmax^n$ for all $r\in \N$, 
so in this case the result is obvious. We next consider the 
case $I\neq \emptyset$. Note that in this case we also have
$J\neq \emptyset$, because $\sV\subset \sH_r$ for all 
$r\in \N$ and $\supp \sV=\{ 1, \ldots ,n \}$. 

We may also assume, without loss of generality, 
that $\oplus_{j\in J} a_j^r=\unit$ for all $r\in \N$. 
Then, since $\supp \sV=\{ 1, \ldots ,n \}$ and 
$\oplus_{i\in I} a^r_i x_i \leq \oplus_{j\in J} a^r_j x_j$ 
for $r\in \N$ and $x\in \sV$, it follows that the sequence 
$\{a_i^r \}_{r\in \N}$ is bounded from above for all $i\in I$. 
Therefore, we may assume, taking sub-sequences if necessary, 
that there exists $a_i\in \rmax$ such that 
$\lim_{r\rightarrow \infty} a^r_i=a_i$ for $i\in I$.  

We claim that $a_i\neq \zero $ for all $i\in I$. 
By the contrary, assume that $a_h =\zero $ for some 
$h\in I$. Since by Lemma~\ref{LemmaInclu} we have 
$a_j^{r}(\myminus{a_h^{r})} \leq a_j^1(\myminus{a_h^1)}$ for all 
$j\in J$ and $r \in \N$, this implies that 
$\lim_{r\rightarrow \infty} a^r_j= \zero$ for all $j\in J$, 
which contradicts the fact that $J\neq \emptyset $ and 
$\oplus_{j\in J} a_j^r=\unit$ for all $r\in \N$. 
This proves our claim.  

Since $\oplus_{j\in J} a_j^r=\unit$ for all $r\in \N$, 
the sequence $\{ a_j^r \}_{r\in \N}$ is also bounded from above 
for all $j\in J$.  
Then, taking sub-sequences if necessary, 
we may assume that there exists $a_j\in \rmax$ such that 
$\lim_{r\rightarrow \infty} a^r_j=a_j$ for all $j\in J$. 
If we define $J':=\set{j\in J}{a_j \neq \zero}$ and 
\[
\sH:=\set{x\in \rmax^n}{\oplus_{i\in I} a_i x_i \leq \oplus_{j\in J'} a_j x_j} \; , 
\] 
then it follows that $\sH=\cap_{r\in \N} \sH_r$. 
Indeed, if $x\in \cap_{r\in \N} \sH_r$, we have
\[
\oplus_{i\in I} a_i x_i = 
\lim_{r\rightarrow \infty} \left( \oplus_{i\in I} a^r_i x_i \right) \leq 
\lim_{r\rightarrow \infty} \left(\oplus_{j\in J} a^r_j x_j \right) =
\oplus_{j\in J'} a_j x_j \; , 
\]
and thus $x\in \sH$. Therefore, 
$\cap_{r\in \N} \sH_r \subset \sH$. 
Conversely, since $J'\subset J$ and 
$a_j \myminus{(a_i)}\leq a_j^r \myminus{(a_i^r)}$ 
for all $i\in I$, $j\in J'$ and $r\in \N$, by Lemma~\ref{LemmaInclu} 
it follows that $\sH\subset \sH_r$ for all $r\in \N$. 
Therefore, we also have $\sH\subset \cap_{r\in \N} \sH_r$.
\end{proof}

\begin{remark}
Lemma~\ref{LemmaInter} does not hold if $\sV$ does not have full support. 
For instance, consider 
\[
\sV=\set{x\in \rmax^3}{x_2=\zero,x_1\leq x_3}
\] 
and the decreasing sequence of half-spaces
\[
\sH_r=\set{x\in \rmax^3}{x_1\oplus r x_2 \leq x_3 } \; ,  
\]
where $r\in \N$. Then, 
$\cap_{r\in \N} \sH_r=\sV$, but $\sV$ is not a half-space of $\rmax^3$.
\end{remark} 

\begin{theorem}\label{TheoExistanceMinimal}
Let $\sV\subset \rmax^n$ be a max-plus cone with full support. 
If $\sV$ is contained in the half-space $\sH$, 
then there exists a half-space $\sH'$ such that  
$\sV\subset \sH'\subset \sH$ and $\sH'$ 
is minimal for inclusion with respect to this property.
\end{theorem}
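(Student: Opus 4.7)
The plan is to apply Zorn's lemma to the poset $\mathcal{P}$ of half-spaces $\sH'$ with $\sV \subset \sH' \subset \sH$, ordered by inclusion: a minimal element is exactly what we need. Since $\sH \in \mathcal{P}$, it suffices to show that every chain $\mathcal{C} = \{\sH_\alpha\}$ in $\mathcal{P}$ admits a lower bound in $\mathcal{P}$.

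Given such a chain, I first reduce to a cofinal subchain of constant ``combinatorial type.'' Writing each $\sH_\alpha$ in the canonical form with disjoint index sets $(I_\alpha,J_\alpha) \subset \{1,\dots,n\}$ fixed before Lemma~\ref{LemmaInclu}, that lemma shows that as one descends in $\mathcal{C}$ the set $I_\alpha$ grows and $J_\alpha$ shrinks. Since both live in a finite set, they stabilize, so there is a cofinal subchain on which $(I_\alpha,J_\alpha) = (I,J)$ is constant; any lower bound of this subchain is automatically a lower bound of $\mathcal{C}$. If $I = \emptyset$ then each $\sH_\alpha$ equals $\rmax^n$ and we are done; otherwise the full-support hypothesis on $\sV$ together with $\sV \subset \sH_\alpha$ forces $J \neq \emptyset$, and I rescale each inequality so that $\oplus_{j\in J} a_j^\alpha = \unit$.

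Once $(I,J)$ is fixed and nonempty, Lemma~\ref{LemmaInclu} turns inclusion into a purely numerical relation: $\sH_\beta \subset \sH_\alpha$ iff $c_{ij}^\beta \leq c_{ij}^\alpha$ for every $(i,j) \in I \times J$, where $c_{ij}^\alpha := a_j^\alpha - a_i^\alpha$. Setting $c_{ij}^* := \inf_{\alpha} c_{ij}^\alpha$, I extract a decreasing sequence $(\alpha_n)$ in $\mathcal{C}$ with $c_{ij}^{\alpha_n} \to c_{ij}^*$ simultaneously for every pair: at step $n$, for each of the finitely many pairs choose an index whose $c_{ij}$-value lies within $1/n$ of $c_{ij}^*$ (or below $-n$ when $c_{ij}^* = -\infty$), then let $\alpha_n$ be the minimum of those finitely many indices together with $\alpha_{n-1}$ in the chain order. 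The totality of $\mathcal{C}$ and the finiteness of $I \times J$ are essential here. Lemma~\ref{LemmaInter} (whose full-support hypothesis on $\sV$ matches ours) then produces a half-space $\sH^\dagger := \bigcap_n \sH_{\alpha_n}$, and inspection of its proof shows that $\sH^\dagger$ has index sets $I$ and $J^\dagger := \{j \in J : c_{ij}^* > -\infty\} \subset J$, with difference parameters equal to $c_{ij}^*$ on $I \times J^\dagger$.

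It remains to verify that $\sH^\dagger$ is a lower bound of $\mathcal{C}$. Containment $\sV \subset \sH^\dagger$ is immediate from $\sV \subset \sH_{\alpha_n}$ for every $n$, and $\sH^\dagger \subset \sH_{\alpha_1} \subset \sH$, so $\sH^\dagger \in \mathcal{P}$. For any $\alpha \in \mathcal{C}$, the inclusions $I \subset I$, $J^\dagger \subset J$, and the inequalities $c_{ij}^\dagger = c_{ij}^* \leq c_{ij}^\alpha$ on $I \times J^\dagger$ yield $\sH^\dagger \subset \sH_\alpha$ via Lemma~\ref{LemmaInclu}. Zorn's lemma then delivers the minimal half-space. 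I expect the main obstacle to be the simultaneous countable extraction in the previous paragraph: one must approach the infima $c_{ij}^*$ in every coordinate \emph{at once} through actual elements of the chain, and this is precisely what the finiteness of $I \times J$ and the total order on $\mathcal{C}$ make possible.
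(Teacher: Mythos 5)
Your proposal is correct and follows essentially the same route as the paper's proof: Zorn's lemma, stabilization of the index sets $(I,J)$ along the chain via Lemma~\ref{LemmaInclu}, extraction of a countable decreasing subsequence realizing the infima of the differences $a_j^\alpha\myminus{(a_i^\alpha)}$, and Lemma~\ref{LemmaInter} to identify the intersection as a half-space. The only (harmless) difference is cosmetic: you verify the lower-bound property by applying Lemma~\ref{LemmaInclu} directly to $\sH^\dagger$ versus each $\sH_\alpha$, whereas the paper notes that each $\sH_\alpha$ contains some $\sH_{\alpha_r}$; you are also slightly more explicit than the paper about why the simultaneous extraction over the finitely many pairs $(i,j)$ is possible.
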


\begin{proof}
By Zorn's Lemma it suffices to show that for any chain 
$\{ \sH_\alpha \}_{\alpha\in \Delta}$ of half-spaces 
which satisfies $\sV\subset \sH_\alpha \subset \sH$ for all 
$\alpha \in \Delta$, there exists a half-space $\sH'$ such that 
$\sH'=\cap_{\alpha\in \Delta} \sH_\alpha$.   

According to Lemma~\ref{LemmaInclu}, we may assume that 
\[
\sH_\alpha=\set{x\in \rmax^n}{\oplus_{i\in I} a_i^\alpha x_i \leq \oplus_{j\in J} a_j^\alpha x_j}  
\]
for all $\alpha \in \Delta$, where $I$ and $J$ are disjoint subsets of 
$\{1,\ldots ,n\}$ and $a_k^\alpha \in \R$ for all $k\in I\cup J$. 
Again, if $I= \emptyset$ the previous assertion is trivial, 
so assume $I\neq \emptyset$. Consider any sequence 
$\{ \alpha_r \}_{r\in \N}\subset \Delta$ such that 
the sequence $\{ a_j^{\alpha_r}\myminus{(a_i^{\alpha_r})} \}_{r\in \N}$ 
is decreasing and
\begin{equation}~\label{limite}
\lim_{r\rightarrow \infty} a_j^{\alpha_r}\myminus{(a_i^{\alpha_r})}
=\inf_{\alpha \in \Delta} a_j^{\alpha}\myminus{(a_i^{\alpha})} \; ,
\end{equation}
for all $i\in I$ and $j\in J$. Then, by Lemma~\ref{LemmaInter} 
there exists a half-space $\sH'$ such that 
$\sH'=\cap_{r\in \N} \sH_{\alpha_r}$. 
Since~\eqref{limite} is satisfied, by Lemma~\ref{LemmaInclu}, for 
any $\alpha \in \Delta$ there exists $r\in \N$ such that 
$\sH_{\alpha_r}\subset \sH_\alpha $. Therefore, we have 
$\sH'=\cap_{\alpha\in \Delta} \sH_\alpha $.  
\end{proof}

As a consequence of Theorem~\ref{TheoExistanceMinimal} and 
the separation theorem for closed cones of~\cite{zimmerman77,shpiz,cgqs04}, 
it follows that any closed cone $\sV$ with full support can 
be expressed as the intersection of a family of minimal half-spaces 
with respect to $\sV$. When $\sV$ is finitely generated, 
by Theorem~\ref{MinkowskiWeylCones} 
we conclude that it is possible to select a finite number of minimal 
half-spaces with respect to $\sV$ such that their intersection is equal to 
$\sV$. However, as in the classical case, 
even in the finitely generated case, 
the number of minimal half-spaces with respect to $\sV$ need 
not be finite, as it is shown in the next section. 

\section{Characterization of minimal half-spaces}

Throughout this section, $\sV\subset \rmax^n$ will represent a fixed max-plus 
cone generated by the vectors $v^{r} \in \rmax^n$, where $r=1,\ldots , p$. 
For the sake of simplicity, in this section we shall assume that 
all the vectors we consider have only finite entries. 
We next recall basic definitions and properties related to the natural 
cell decomposition of $\rmax^n$ induced by the generators of $\sV$. 
We refer the reader to~\cite{DS04} for a complete presentation,  
but we warn that the results of~\cite{DS04} are in the setting of 
the min-plus semiring $\rmin:=(\Rmin ,\min ,+)$, 
which is however equivalent to the setting considered here.    

We define the type of a vector $x\in \rmax^n$ 
relative to the generators $v^{r}$ as the $n$-tuple 
$\type(x)=(S_1(x),\ldots , S_n(x))$ of subsets 
$S_j(x)\subset \left\{1,\ldots ,p\right\}$ 
defined as follows: 
\begin{equation} 
S_j(x):=\set{r}{ v^{r}_j \myminus{(x_j)}= \oplus_{1\leq k \leq n} v^{r}_k \myminus{(x_k)}} 
%= \set{r}{ v^{r}_j (-x_j) = -\lambda^{r}(x)}
\; .
\end{equation}
Note that 
$v^{r}_j \myminus{(x_j)} < \oplus_{1\leq k \leq n} v^{r}_k \myminus{(x_k)}$ 
if $r\not \in S_j(x)$ and that any $r\in \left\{1,\ldots ,p\right\}$ 
belongs to some $S_j(x)$.  

Given a $n$-tuple $S=(S_1,\ldots , S_n)$ of subsets of 
$ \left\{1,\ldots ,p \right\} $, consider like in~\cite{DS04} 
the set $X_S$ of all the vectors whose type contains $S$, 
i.e. 
\begin{equation}
X_S:=\set{x\in \rmax^n}{S_j\subset S_j(x),\; \forall j=1,\ldots ,n } \; .
\end{equation} 
Lemma~10 of~\cite{DS04} shows that the sets $X_S$ are closed convex polyhedra 
(both in the max-plus and usual sense) which are given by  
\[
X_S=\set{x\in \rmax^n}{x_j v^r_i\leq x_iv^r_j ,\; \forall i,j\in \{1,\dots ,n\} 
\makebox{ with }r\in S_j } \; .
\] 
The natural cell decomposition of $\rmax^n$ induced 
by the generators of $\sV$ is the collection of convex polyhedra 
$X_S$, where $S$ ranges over all the possible types. 
This cell decomposition has in particular the property that $\sV$ 
is the union of its bounded cells, where a cell is said to be bounded 
if it is bounded in the $(n-1)$-dimensional \new{max-plus or tropical 
projective space} $\mathbb{R}^n / (1,\ldots ,1)\R$   
(see~\cite{DS04} for details). 

Given a cell $X_S$, if we define the undirected graph $G_S$ with set of nodes 
$\{1,\ldots ,n\}$ and an edge connecting the nodes $i$ and $j$ 
if and only if $S_i\cap S_j\neq \emptyset$, 
then by Proposition~17 of~\cite{DS04} the dimension of $X_S$ 
is given by the number of connected components of $G_S$ 
(in~\cite{DS04} the dimension of $X_S$ is one less the one 
considered here because it refers to the projective space). 
Any non-zero vector in a cell of dimension one, which is therefore 
of the form $\set{\lambda x}{\lambda \in \rmax}$ for some $x\in \R^n$, 
is called a \new{vertex} of the natural cell decomposition. 

\begin{example}
Consider the max-plus cone $\sV \subset \rmax^3$ generated by the 
vectors: $v^1=(1,2,3)^T$, $v^2=(2,4,6)^T$ and $v^3=(3,6,9)^T$. 
This cone is represented on the left hand side of Figure~\ref{FigureTypes} 
by the bounded dark gray region together with the two line segments 
joining the points $v^1$ and $v^3$ to it. On the same figure 
we show the type of a vector, for each cell $X_S$ contained in $\sV$. 
For instance, the type of the vector $a=(0,1,3)^T$ is 
$S=\type(a)=(\{1\},\{1,2\},\{2,3\})$. Then, since the graph $G_{S}$ 
has only one connected component, $a$ is a vertex. 
If we take $b=(0,1,2.5)^T$, then $S=\type(b)=(\{1\},\{1\},\{2,3\})$ 
so that in this case the cell $X_{S}$ is two dimensional. 
In Figure~\ref{FigureTypes} this cell is represented by the 
line segment which connects the points $v^1$ and $a$.   
The natural cell decomposition of $\rmax^3$ induced by the generators of 
$\sV$ has six vertices, fifteen two dimensional cells (six of them bounded) 
and ten three dimensional cells (only one of them bounded, 
which is represented by the bounded dark gray region labeled 
by the type $S=(\{1\},\{2\},\{3\})$ 
on the left hand side of Figure~\ref{FigureTypes}). 
  
\begin{figure}
\begin{center}
\begin{picture}(0,0)%
\includegraphics{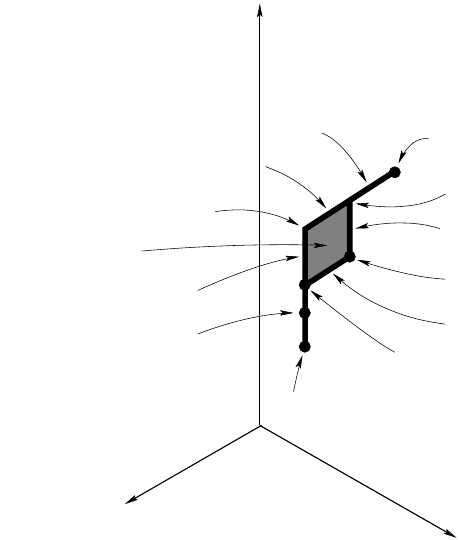}%
\end{picture}%
\setlength{\unitlength}{2368sp}%
\begingroup\makeatletter\ifx\SetFigFont\undefined%
\gdef\SetFigFont#1#2#3#4#5{%
  \reset@font\fontsize{#1}{#2pt}%
  \fontfamily{#3}\fontseries{#4}\fontshape{#5}%
  \selectfont}%
\fi\endgroup%
\begin{picture}(6112,7169)(2536,-5483)
\put(4051,-4786){\makebox(0,0)[lb]{\smash{{\SetFigFont{9}{10.8}{\rmdefault}{\mddefault}{\updefault}{\color[rgb]{0,0,0}$x_1$}%
}}}}
\put(6751,-2836){\makebox(0,0)[lb]{\smash{{\SetFigFont{9}{10.8}{\rmdefault}{\mddefault}{\updefault}{\color[rgb]{0,0,0}$v^1$}%
}}}}
\put(7351,-1636){\makebox(0,0)[lb]{\smash{{\SetFigFont{9}{10.8}{\rmdefault}{\mddefault}{\updefault}{\color[rgb]{0,0,0}$v^2$}%
}}}}
\put(5476,-3736){\makebox(0,0)[lb]{\smash{{\SetFigFont{9}{10.8}{\rmdefault}{\mddefault}{\updefault}{\color[rgb]{0,0,0}$(\{1\},\{1\},\{1,2,3\})$}%
}}}}
\put(8326,-211){\makebox(0,0)[lb]{\smash{{\SetFigFont{9}{10.8}{\rmdefault}{\mddefault}{\updefault}{\color[rgb]{0,0,0}$(\{1,2,3\},\{3\},\{3\})$}%
}}}}
\put(8476,-886){\makebox(0,0)[lb]{\smash{{\SetFigFont{9}{10.8}{\rmdefault}{\mddefault}{\updefault}{\color[rgb]{0,0,0}$(\{1,2\},\{2,3\},\{3\})$}%
}}}}
\put(8476,-1411){\makebox(0,0)[lb]{\smash{{\SetFigFont{9}{10.8}{\rmdefault}{\mddefault}{\updefault}{\color[rgb]{0,0,0}$(\{1,2\},\{2\},\{3\})$}%
}}}}
\put(6151,1439){\makebox(0,0)[lb]{\smash{{\SetFigFont{9}{10.8}{\rmdefault}{\mddefault}{\updefault}{\color[rgb]{0,0,0}$x_3$}%
}}}}
\put(3976,-586){\makebox(0,0)[lb]{\smash{{\SetFigFont{9}{10.8}{\rmdefault}{\mddefault}{\updefault}{\color[rgb]{0,0,0}$(\{1\},\{2,3\},\{3\})$}%
}}}}
\put(3076,-2836){\makebox(0,0)[lb]{\smash{{\SetFigFont{9}{10.8}{\rmdefault}{\mddefault}{\updefault}{\color[rgb]{0,0,0}$(\{1\},\{1\},\{2,3\})$}%
}}}}
\put(7876,-3136){\makebox(0,0)[lb]{\smash{{\SetFigFont{9}{10.8}{\rmdefault}{\mddefault}{\updefault}{\color[rgb]{0,0,0}$(\{1\},\{1,2\},\{2,3\})$}%
}}}}
\put(8551,-2686){\makebox(0,0)[lb]{\smash{{\SetFigFont{9}{10.8}{\rmdefault}{\mddefault}{\updefault}{\color[rgb]{0,0,0}$(\{1\},\{2\},\{2,3\})$}%
}}}}
\put(8551,-2086){\makebox(0,0)[lb]{\smash{{\SetFigFont{9}{10.8}{\rmdefault}{\mddefault}{\updefault}{\color[rgb]{0,0,0}$(\{1,2\},\{2\},\{2,3\})$}%
}}}}
\put(7951,-586){\makebox(0,0)[lb]{\smash{{\SetFigFont{9}{10.8}{\rmdefault}{\mddefault}{\updefault}{\color[rgb]{0,0,0}$v^3$}%
}}}}
\put(3076,-2236){\makebox(0,0)[lb]{\smash{{\SetFigFont{9}{10.8}{\rmdefault}{\mddefault}{\updefault}{\color[rgb]{0,0,0}$(\{1\},\{1,2\},\{3\})$}%
}}}}
\put(3076,-1186){\makebox(0,0)[lb]{\smash{{\SetFigFont{9}{10.8}{\rmdefault}{\mddefault}{\updefault}{\color[rgb]{0,0,0}$(\{1\},\{1,2,3\},\{3\})$}%
}}}}
\put(4876, 14){\makebox(0,0)[lb]{\smash{{\SetFigFont{9}{10.8}{\rmdefault}{\mddefault}{\updefault}{\color[rgb]{0,0,0}$(\{1,2\},\{3\},\{3\})$}%
}}}}
\put(2551,-1711){\makebox(0,0)[lb]{\smash{{\SetFigFont{9}{10.8}{\rmdefault}{\mddefault}{\updefault}{\color[rgb]{0,0,0}$(\{1\},\{2\},\{3\})$}%
}}}}
\put(6301,-2161){\makebox(0,0)[lb]{\smash{{\SetFigFont{9}{10.8}{\rmdefault}{\mddefault}{\updefault}{\color[rgb]{0,0,0}$a$}%
}}}}
\put(6751,-2557){\makebox(0,0)[lb]{\smash{{\SetFigFont{9}{10.8}{\rmdefault}{\mddefault}{\updefault}{\color[rgb]{0,0,0}$b$}%
}}}}
\put(8476,-5236){\makebox(0,0)[lb]{\smash{{\SetFigFont{9}{10.8}{\rmdefault}{\mddefault}{\updefault}{\color[rgb]{0,0,0}$x_2$}%
}}}}
\end{picture}%
\enspace\; \;\;\;\;\;\;\;\;\;\;\;\;\;\;\;\;\;\;
\begin{picture}(0,0)%
\includegraphics{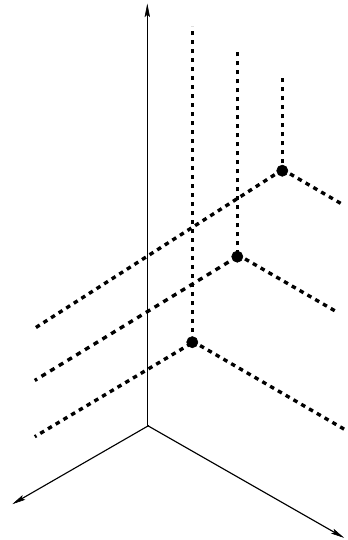}%
\end{picture}%
\setlength{\unitlength}{2368sp}%
\begingroup\makeatletter\ifx\SetFigFont\undefined%
\gdef\SetFigFont#1#2#3#4#5{%
  \reset@font\fontsize{#1}{#2pt}%
  \fontfamily{#3}\fontseries{#4}\fontshape{#5}%
  \selectfont}%
\fi\endgroup%
\begin{picture}(4634,7169)(4036,-5483)
\put(7951,-436){\makebox(0,0)[lb]{\smash{{\SetFigFont{9}{10.8}{\rmdefault}{\mddefault}{\updefault}{\color[rgb]{0,0,0}$v^3$}%
}}}}
\put(7276,-1636){\makebox(0,0)[lb]{\smash{{\SetFigFont{9}{10.8}{\rmdefault}{\mddefault}{\updefault}{\color[rgb]{0,0,0}$v^2$}%
}}}}
\put(6676,-2761){\makebox(0,0)[lb]{\smash{{\SetFigFont{9}{10.8}{\rmdefault}{\mddefault}{\updefault}{\color[rgb]{0,0,0}$v^1$}%
}}}}
\put(4051,-4786){\makebox(0,0)[lb]{\smash{{\SetFigFont{9}{10.8}{\rmdefault}{\mddefault}{\updefault}{\color[rgb]{0,0,0}$x_1$}%
}}}}
\put(6151,1439){\makebox(0,0)[lb]{\smash{{\SetFigFont{9}{10.8}{\rmdefault}{\mddefault}{\updefault}{\color[rgb]{0,0,0}$x_3$}%
}}}}
\put(8476,-5236){\makebox(0,0)[lb]{\smash{{\SetFigFont{9}{10.8}{\rmdefault}{\mddefault}{\updefault}{\color[rgb]{0,0,0}$x_2$}%
}}}}
\end{picture}%
\end{center}
\caption{Illustration of the combinatorial types (left) and of the natural cell decomposition of $\rmax^3$ induced 
by the generators of a max-plus cone (right), as defined by Develin and Sturmfels~\cite{DS04}.}
\label{FigureTypes}
\end{figure}

A simple geometric construction of the natural cell decomposition of 
$\rmax^n$ induced by the generators of $\sV$ can be obtained if we consider the 
min-plus hyperplanes whose apices are the generators of $\sV$. 
Given $a\in \rmax^n$, the \new{min-plus hyperplane} with apex 
$\myminus{a}$ is the set of vectors $x\in \rmax^n$ such that the minimum  
$\min_{1\leq i\leq n} a_i +x_i$ is attained at least twice 
(we refer the reader to~\cite{joswig04} for details on  
hyperplanes and their relation with half-spaces).  
By Proposition~16 of~\cite{DS04}, the cell decomposition induced 
by the generators of $\sV$ is the common refinement of the fans defined by the 
the $p$ min-plus hyperplanes whose apices are the vectors $v^r$, 
for $r=1,\ldots ,p$. In the case of our example, 
these min-plus hyperplanes are 
represented on the right hand side of Figure~\ref{FigureTypes}, 
where it can be seen that $\sV$ is the union of the bounded cells.  
\end{example}

Assume that 
\[
\sH=\set{x\in \rmax^n}{\oplus_{i\in I} a_i x_i \leq \oplus_{j\in J} a_j x_j} \;  
\]
is a minimal half-space with respect to $\sV$. Then, we necessarily have  
$I\cup J=\{1,\dots ,n\}$. Indeed, if $h\not \in I\cup J$, 
defining $a_h=\min_{1\leq r \leq p}\{\oplus_{j\in J} a_j v_j^r \myminus{(v_h^r)}\}$, 
it follows that the half-space 
\[
\sH'=\set{x\in \rmax^n}{a_h x_h \oplus (\oplus_{i\in I} a_i x_i )\leq \oplus_{j\in J} a_j x_j} \;  
\]
contains $\sV$ because it contains its generators, 
which by Lemma~\ref{LemmaInclu} contradicts the minimality of $\sH$. 
For this reason, 
in this section we shall assume that $I\cup J=\{1,\dots ,n\}$  
since we are interested in studying minimal half-spaces, 
and like in~\cite{joswig04} we shall call the vector 
$\myminus{a}\in \sH \subset \rmax^n$ the \new{apex} of $\sH$. 

The following lemma gives a necessary and sufficient condition for 
$\sV$ to be contained in a half-space in terms of the type of its apex. 

\begin{lemma}\label{Lema1}
The max-plus cone $\sV$ is contained in the half-space 
\[
\sH=\set{x\in \rmax^n}
{\oplus_{i\in I} a_i x_i \leq \oplus_{j\in J} a_j x_j} 
\]
with apex $\myminus{a}$ if, and only if, 
$\cup_{j\in J} S_j(\myminus{a})=\left\{ 1,\ldots ,p\right\}$. 
\end{lemma}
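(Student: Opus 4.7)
The plan is to reduce the inclusion $\sV\subset \sH$ to a condition on the generators $v^1,\dots ,v^p$ and then recognize this condition as the one described in terms of the type of the apex.

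First I would observe that, since $\sH$ is itself a max-plus cone (as noted just after its definition in Section~\ref{sec-tmw}) and $\sV=\cone\{v^1,\dots ,v^p\}$, one has $\sV\subset \sH$ if and only if $v^r\in \sH$ for every $r\in \{1,\dots ,p\}$, i.e.\
\[
\oplus_{i\in I} a_i v^r_i \leq \oplus_{j\in J} a_j v^r_j \quad \text{for all } r\in\{1,\dots ,p\} \enspace .
\]
Because $I$ and $J$ are disjoint and, by the standing convention of this section, $I\cup J=\{1,\dots ,n\}$, this inequality is equivalent to saying that the global maximum $\oplus_{1\leq k\leq n} a_k v^r_k$ is attained at some index in $J$.

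Next I would translate this into the language of types. Since the $a_k$ are finite, the apex satisfies $(\myminus{a})_k=-a_k$ and hence $v^r_k \myminus{((\myminus{a})_k)} = a_k v^r_k$, so by definition
\[
S_j(\myminus{a})=\set{r}{a_j v^r_j=\oplus_{1\leq k\leq n} a_k v^r_k} \enspace .
\]
Therefore $r\in \cup_{j\in J} S_j(\myminus{a})$ precisely when the maximum $\oplus_{1\leq k\leq n} a_k v^r_k$ is attained at some coordinate in $J$, which by the previous step is equivalent to $v^r\in \sH$.

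Combining these two observations yields $\sV\subset \sH$ iff every $r\in \{1,\dots ,p\}$ belongs to $\cup_{j\in J} S_j(\myminus{a})$, i.e.\ iff $\cup_{j\in J} S_j(\myminus{a})=\{1,\dots ,p\}$. I do not expect any genuine obstacle: the argument is really just an unwinding of definitions, and the only substantive ingredient is the already-established reduction $I\cup J=\{1,\dots ,n\}$, which is what turns the defining inequality of $\sH$ into the statement that the maximum over all coordinates of $a_k v^r_k$ is attained in $J$.
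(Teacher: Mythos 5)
Your proposal is correct and follows essentially the same route as the paper: both reduce $\sV\subset\sH$ to membership of the generators $v^r$ in $\sH$, and both identify the inequality $\oplus_{i\in I} a_i v^r_i \leq \oplus_{j\in J} a_j v^r_j$ (using $I\cup J=\{1,\dots,n\}$ and disjointness) with the statement that the maximum $\oplus_{1\leq k\leq n} a_k v^r_k$ is attained at some index of $J$, i.e.\ $r\in \cup_{j\in J} S_j(\myminus{a})$. The only cosmetic difference is that you phrase it as a chain of equivalences while the paper argues the two implications separately (the ``only if'' part by contraposition).
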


\begin{proof}
Assume that $\cup_{j\in J} S_j(\myminus{a})\neq \left\{ 1,\ldots ,p\right\}$. 
Then, there exists $r\in \left\{1,\ldots ,p\right\} $ such that 
$r\not \in S_j(\myminus{a})$ for all $j\in J$, and so  
$a_j v^{r}_j < \oplus_{1\leq k \leq n} a_k v^{r}_k$ for all $j\in J$. 
Therefore, we have  
\[ 
\oplus_{j\in J} a_j v^{r}_j < \oplus_{1\leq k \leq n} a_k v^{r}_k= 
\oplus_{i\in I} a_i v^{r}_i \; ,
\]  
which means that $v^{r}$ does not belong to $\sH$ and so 
$\sV$ is not contained in $\sH$. 
This shows the ``only if'' part of the lemma. 

Now assume that $\cup_{j\in J} S_j(\myminus{a})=\left\{ 1,\ldots ,p\right\}$. 
Then, for each $r\in \left\{1, \ldots ,p \right\}$ there exists $j\in J$ 
such that $r\in S_j(\myminus{a})$, which means that 
$a_j v^{r}_j=\oplus_{1\leq k \leq n} a_k v^{r}_k$. 
Therefore, we have  
\[ 
\oplus_{i\in I} a_i v^{r}_i \leq \oplus_{1\leq k \leq n} a_k v^{r}_k= 
\oplus_{j\in J} a_j v^{r}_j \; . 
\] 
Since this holds for all $r\in \left\{1, \ldots ,p \right\}$, 
it follows that $\sH$ contains the generators of $\sV$, 
and thus $\sV$ is contained in $\sH$.  
This proves the ``if'' part of the lemma.
\end{proof}

Now we can characterize the minimality of a half-space 
with respect to $\sV$ in terms of the type of its apex.

\begin{theorem}\label{TheoCharacMin}
The half-space  
\[
\sH=\set{x\in \rmax^n}
{\oplus_{i\in I} a_i x_i \leq \oplus_{j\in J} a_j x_j} 
\]
with apex $\myminus{a}$ is minimal with respect to the max-plus cone $\sV$ if, 
and only if, the following conditions are satisfied: 
\begin{enumerate}[(i)]
\item\label{item:C1} For each $i\in I$ there exists $j\in J$ such that 
$S_i(\myminus{a})\cap S_j(\myminus{a}) \neq \emptyset $,
\item\label{item:C2} For each $j\in J$ there exists $i\in I$ such that 
$S_i(\myminus{a})\cap S_j(\myminus{a}) \not \subset 
\cup_{k\in J\setminus\left\{ j \right\} } S_k(\myminus{a})$,  
\item\label{item:C3} $\cup_{j\in J} S_j(\myminus{a})= 
\left\{ 1,\ldots ,p\right\}$.
\end{enumerate} 
\end{theorem}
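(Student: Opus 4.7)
The plan is to prove both directions separately. Condition~(iii) is precisely the criterion of Lemma~\ref{Lema1} for $\sV\subset \sH$, a prerequisite for the minimality question, so the substance of the theorem lies in~(i) and~(ii). For necessity (minimality implies (i) and (ii)), I will show that failure of either condition lets me construct a strictly smaller half-space still containing $\sV$, contradicting minimality. For sufficiency, I will suppose a strict half-space $\sH'\subsetneq \sH$ contains $\sV$, then combine Lemma~\ref{LemmaInclu} with a scaling trick to derive a contradiction from (i),~(ii),~(iii).

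For the necessity of~(i), I would first observe that, under~(iii), condition~(i) is equivalent to $S_i(\myminus{a})\neq \emptyset$ for every $i\in I$: if $S_i\neq\emptyset$, picking $r\in S_i$ and using (iii) to locate some $j\in J$ with $r\in S_j$ yields $r\in S_i\cap S_j$. If $S_{i_0}=\emptyset$ for some $i_0\in I$, then every generator satisfies $a_{i_0}v^r_{i_0}<\oplus_k a_kv^r_k$, and replacing $a_{i_0}$ by $a_{i_0}+\epsilon$ for sufficiently small $\epsilon>0$ produces $\sH'\subsetneq \sH$ still containing $\sV$. For the necessity of~(ii), if the condition fails at $j_0\in J$, I would decrease $a_{j_0}$ to $a_{j_0}-\epsilon$: the failure of~(ii) ensures exactly that every generator $v^r$ with $r\in S_i\cap S_{j_0}$ for some $i\in I$ also lies in some $S_k$ with $k\in J\setminus \{j_0\}$, rescuing the constraint, while the remaining generators permit the decrease provided $\epsilon$ is small enough. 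Strict inclusion in each case is verified by evaluating on a suitable test vector.

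For sufficiency, I assume (i)--(iii) and $\sV\subset \sH'\subsetneq \sH$, where $\sH'=\set{x\in \rmax^n}{\oplus_{i\in I'} b_i x_i\leq \oplus_{j\in J'} b_j x_j}$. Lemma~\ref{LemmaInclu} yields $I\subset I'$, $J'\subset J$, and $b_j\myminus{(b_i)}\leq a_j\myminus{(a_i)}$ for $i\in I$, $j\in J'$; rewriting this as $b_j-a_j\leq b_i-a_i$ and scaling $b$ uniformly so that $\max_{j\in J'}(b_j-a_j)=0$, I obtain $b_j\leq a_j$ for $j\in J'$ and $b_i\geq a_i$ for $i\in I$. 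Case~A ($I'=I$, $J'=J$): strict inclusion forces some $b_{i^*}>a_{i^*}$ (contradicted via~(i) by locating $v^r$ attaining the maximum on both the $I$- and $J$-sides) or some $b_{j^*}<a_{j^*}$ (contradicted via~(ii) using the unique $J$-maximizer). Case~B ($J'\subsetneq J$): picking $j_0\in J\setminus J'\subset I'$ and applying~(ii) produces $i_0\in I$ and $v^r$ with $r\in S_{i_0}\cap S_{j_0}$ and $r\notin S_k$ for $k\in J\setminus\{j_0\}$; then $\oplus_{i\in I'} b_iv^r_i\geq b_{i_0}v^r_{i_0}\geq a_{i_0}v^r_{i_0}=\oplus_k a_kv^r_k$, while $\oplus_{j\in J'} b_jv^r_j\leq \oplus_{j\in J'} a_jv^r_j<\oplus_k a_kv^r_k$ (since $J'\subset J\setminus\{j_0\}$), contradicting $v^r\in \sH'$.

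The main obstacle is Case~B of sufficiency: since $j_0\in I'\setminus I$, Lemma~\ref{LemmaInclu} provides no direct comparison between $b_{j_0}$ and $a_{j_0}$, so the argument must bypass $b_{j_0}$ entirely and lower-bound the LHS via $b_{i_0}\geq a_{i_0}$ for some $i_0\in I$. This is exactly why~(ii) is stated in its strong form, requiring the witness $r$ to lie in $S_i\cap S_{j_0}$ for some $i\in I$ rather than merely in $S_{j_0}\setminus\cup_{k\neq j_0}S_k$: it is the intersection with $S_i$ that supplies the index $i_0\in I$ needed for the bypass.
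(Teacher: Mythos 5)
Your proof is correct, and its necessity half coincides with the paper's: both perturb a single coefficient (raising $a_{i_0}$ when~\eqref{item:C1} fails, lowering $a_{j_0}$ when~\eqref{item:C2} fails) and use Lemma~\ref{LemmaInclu} to see that the perturbed half-space is strictly smaller yet still contains every generator. The sufficiency half, however, takes a genuinely leaner route. The paper first replaces the competitor by a \emph{minimal} half-space $\sH'\subset\sH$ (invoking the Zorn's-lemma-based Theorem~\ref{TheoExistanceMinimal}), rules out $J'\subsetneq J$ exactly as you do in your Case~B, and then pins down $b_i=a_i$ on $I$ by a detour: conditions~\eqref{item:C1} and~\eqref{item:C2} force $S_k(\myminus{a})\neq\emptyset$ for all $k$, hence by the Vorobyev--Zimmermann covering theorem the apex $\myminus{a}$ belongs to $\sV$, and evaluating the $\sH'$-inequality at $\myminus{a}$ yields $\oplus_i b_i\myminus{(a_i)}\leq\oplus_j b_j\myminus{(a_j)}$, which combined with Lemma~\ref{LemmaInclu} and a normalization gives $b_i=a_i$. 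You bypass both external ingredients: an arbitrary $\sH'$ with $\sV\subset\sH'\subsetneq\sH$ suffices (the paper's argument in fact never uses the minimality of its $\sH'$ either), and your normalization $\max_{j\in J'}(b_j-a_j)=0$ only yields $b_i\geq a_i$ on $I$, after which you put condition~\eqref{item:C1} to work directly: a witness $r\in S_{i^*}(\myminus{a})\cap S_j(\myminus{a})$ gives $\oplus_{i}b_iv^r_i\geq b_{i^*}v^r_{i^*}>a_{i^*}v^r_{i^*}=\oplus_k a_kv^r_k\geq \oplus_{j\in J}b_jv^r_j$, expelling $v^r$ from $\sH'$. In the paper, condition~\eqref{item:C1} enters the sufficiency proof only through the covering theorem, so your version makes the role of~\eqref{item:C1} more transparent and keeps the proof self-contained. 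The remaining steps (Case~B via~\eqref{item:C2}, and $b_{j^*}<a_{j^*}$ via the unique $J$-maximizer) match the paper's. One small blemish: in Case~B the parenthetical claim $J\setminus J'\subset I'$ is not a consequence of Lemma~\ref{LemmaInclu} (an index of $J\setminus J'$ may lie outside $I'\cup J'$ altogether), but your argument never uses it --- it needs only $i_0\in I\subset I'$, $b_{i_0}\geq a_{i_0}$ and $J'\subset J\setminus\{j_0\}$ --- so nothing breaks.
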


\begin{proof}
If $\sH$ is minimal with respect to $\sV$, for each $i\in I$ there exists 
$r\in \left\{1, \ldots ,p \right\}$ such that 
$a_i v^r_i =\oplus_{j\in J} a_j v^r_j$ because otherwise 
there would exist $\delta >0 $ such that 
$\delta a_i v^r_i \oplus \left( \oplus_{h\in I\setminus\left\{ i\right\} } 
a_h v^r_h \right) \leq \oplus_{j\in J} a_j v^r_j$ for all 
$r\in \left\{1, \ldots ,p \right\}$, 
contradicting by Lemma~\ref{LemmaInclu} the minimality of $\sH$. 
Therefore, for each $i\in I$ there exist 
$r\in \left\{1, \ldots ,p \right\}$ and $j\in J$ 
such that $a_i v^r_i=a_j v^r_j\geq a_k v^r_k $ for all $k$, 
which implies that $r\in S_i(\myminus{a})\cap S_j(\myminus{a})$ and so 
$S_i(\myminus{a})\cap S_j(\myminus{a})\neq \emptyset $. 

Analogously, if $\sH$ is minimal with respect to $\sV$, 
for each $j\in J$ there exists 
$r\in \left\{1, \ldots ,p \right\}$ such that 
$\oplus_{i\in I} a_i v^r_i = a_j v^r_j >  
\oplus_{k\in J\setminus\left\{ j \right\} } a_k v^r_k $. Otherwise, 
there would exist $\delta <0 $ such that 
$\oplus_{i\in I} a_i v^r_i \leq \delta a_j v^r_j \oplus 
\left( \oplus_{k\in J\setminus\left\{ j\right\} } a_k v^r_k  \right) $ 
for all $r\in \left\{1, \ldots ,p \right\}$, 
which by Lemma~\ref{LemmaInclu} contradicts the minimality of $\sH$.  
Therefore, for each $j\in J$  
there exist $r\in \left\{1, \ldots ,n \right\}$ and $i\in I$  
such that $a_i v^r_i = a_j v^r_j\geq a_k v^r_k $ for all $k$, 
where the inequality is strict for $k\in J\setminus\left\{ j \right\} $, 
which implies that $r\in S_i(\myminus{a})\cap S_j(\myminus{a})$ but 
$r\not \in \cup_{k\in J\setminus\left\{ j \right\} }S_k(\myminus{a})$. 

Finally, since any minimal half-space with respect to $\sV$ contains in particular $\sV$, 
it follows that $\cup_{j\in J} S_j(\myminus{a})=\left\{ 1,\ldots ,p\right\}$ 
by Lemma~\ref{Lema1}. 
This completes the proof of the ``only if'' part of the theorem. 

Now assume that the three conditions of the theorem are satisfied. 
By Lemma~\ref{Lema1}, Condition~\eqref{item:C3} implies that $\sV$ 
is contained in $\sH$. Let 
\[
\sH' = \set{x\in \rmax^n}
{\oplus_{i\in I'} b_i x_i  \leq \oplus_{j\in J'} b_j x_j } 
\]
be a minimal half-space with respect to $\sV$ contained in $\sH$, 
which we know exists by Theorem~\ref{TheoExistanceMinimal}. 
Then, since $\sH' \subset \sH $, by Lemma~\ref{LemmaInclu} 
we have $I\subset I'$, $J'\subset J$ and 
$b_j \myminus{(b_i)} \leq a_j \myminus{(a_i)}$ for all $i\in I$ and $j\in J'$. 

We first show that $I=I'$, and thus $J=J'$. 
By the contrary, assume that $I\neq I'$ and let $h\in I'\setminus I\subset J$. 
Then, by Condition~\eqref{item:C2}, there exist $l\in I$ and 
$r\in \left\{1, \ldots ,p \right\}$ such that 
$a_l v_l^r = a_h v_h^r  > a_k v_k^r$ for all $k\in J\setminus \{h\}$. 
Therefore, 
since $b_j \myminus{(b_i)} \leq a_j \myminus{(a_i)}$ 
for all $i\in I$ and $j\in J'$, we have 
\[ 
v_l^r > \oplus_{k\in J\setminus \{h\} } a_k \myminus{(a_l)} v_k^r \geq 
\oplus_{k\in J'}  a_k \myminus{(a_l)} v_k^r \geq \oplus_{k\in J'} b_k \myminus{(b_l)} v_k^r  
\]
and so 
\[ 
\oplus_{i\in I'} b_i v_i^r \geq \oplus_{i\in I} b_i v_i^r \geq 
b_l v_l^r > \oplus_{k\in J'} b_k v_k^r  \; ,
\]
contradicting the fact that $v^r \in \sV \subset \sH'$. Therefore, 
we conclude that $I=I'$ and $J=J'$. 

Note that by Conditions~\eqref{item:C1} and~\eqref{item:C2}, 
it follows that $S_k(\myminus{a})\neq \emptyset$ for all $k$. 
This implies, by the covering theorem of Vorobyev~\cite[Th.~2.6]{vorobyev67}
and Zimmermann~~\cite[Ch.~3]{Zimmermann.K}
(see~\cite{agk04} for a complete recent discussion, including generalizations;
see also Corollary~12 and Theorem~ 15 of~\cite{DS04}), 
that the apex $\myminus{a}$ of $\sH$ belongs to $\sV$. 
Therefore, since $\sV\subset \sH'$, we have 
$\oplus_{i\in I} b_i \myminus{(a_i)} \leq \oplus_{j\in J} b_j \myminus{(a_j)}$. 
Without loss of generality, we may assume that 
$\oplus_{i\in I} b_i \myminus{(a_i)} \leq \oplus_{j\in J} b_j \myminus{(a_j)}=\unit $. 
Then, since $b_j \myminus{(a_j)}\leq b_i \myminus{(a_i)}$ for all $i\in I$ and $j\in J$, 
we must have $b_i \myminus{(a_i)}=\unit $, i.e.\ $a_i=b_i$, for all $i\in I$. 

Now assume that $a\neq b$. Then, 
there exists $j\in J$ such that $b_j < a_j$ 
(note that $b_k \leq a_k $ for all $k\in J$ 
because $\oplus_{k\in J} b_k \myminus{(a_k)}=\unit $), 
and by Condition~\eqref{item:C2} there exist $i\in I$ and 
$r\in \left\{1, \ldots ,p \right\}$ such that 
\[ 
a_i v_i^r=a_j v_j^r > a_k v_k^r 
\]
for all $k\in J\setminus \{j\}$. Therefore, it follows that 
\[ 
b_i v_i^r = a_i v_i^r = a_j v_j^r > b_j v_j^r 
\] 
and 
\[ 
b_i v_i^r= a_i v_i^r  > a_k v_k^r\geq b_k v_k^r
\] 
for all $k\in J\setminus \{j\}$, implying that 
\[ 
\oplus_{h\in I} b_h v_h^r \geq b_i v_i^r > \oplus_{k\in J} b_k v_k^r  
\; , 
\] 
which contradicts the fact that $v^r \in \sV \subset \sH'$. 
In consequence, we conclude that $a=b$, and so $\sH=\sH'$, 
showing that $\sH$ is a minimal half-space with respect to $\sV$. 
\end{proof}
  
Note that the theorem above tells us that the property of being 
minimal with respect to $\sV$ depends on the type of the apex of a half-space. 
More precisely, if $\myminus{a}\in \R^n$ is the apex of a minimal 
half-space with respect to $\sV$ and $\myminus{b}$ is in the relative interior of $X_S$, 
where $S=\type(\myminus{a})$, 
then $\myminus{b}$ is also the apex of a minimal half-space with respect to $\sV$. 
Observe also that, 
as it was shown in the proof of Theorem~\ref{TheoCharacMin}, 
the conditions in this theorem imply that the apex of a minimal half-space 
with respect to $\sV$ must belong to $\sV$. 
However, these conditions do not imply that 
the apex of a minimal half-space with respect to $\sV$ should be a vertex of the 
natural cell decomposition of $\rmax^n$ induced by the generators of $\sV$. 
In other words, if $\myminus{a}$ is the apex of a minimal half-space with respect to $\sV$ 
and $S=\type(\myminus{a})$, 
then $G_S$ need not have only one connected component. 
Indeed, this is not the case, except when $n=3$. 

\begin{example}\label{Ejemplo1} 
Consider the max-plus cone $\sV \subset \rmax^4$ generated by the 
following vectors: $v^r=(1r,2r,3r,4r)^T$ for $r=1,\ldots ,4$, 
where the product is in the usual algebra.  
Note that these vectors are in general position, 
as defined in~\cite{DS04}. Indeed, this kind of cones were already 
studied in~\cite{blockyu06,AGK09} and can be seen as 
the max-plus analogues of the cyclic polytopes. 

If we take $a=(8,6,3.5,(-0.5))^T$, then 
$S=\type(\myminus{a})=(\{1,2\},\{2\},\{3,4\},\{4\})$, 
so $\myminus{a}$ is not a vertex. 
However, since the conditions of Theorem~\ref{TheoCharacMin} are satisfied for 
$I=\{2,4\}$ and $J=\{1,3\}$, it follows that 
\begin{equation}\label{MinHalfExam}
\sH = \set{x\in \rmax^4}
{6 x_2 \oplus (-0.5) x_4\leq 8 x_1 \oplus 3.5 x_3 } \;  ,
\end{equation}
or
\[
\sH = \set{x\in (\mathbb{R}\cup\{-\infty\})^4}
{\max(6 +x_2 , -0.5+ x_4)\leq \max(8+ x_1, 3.5+ x_3) } 
\]
with the usual notation,
is a minimal half-spaces with respect to $\sV$. 
Indeed, since 
\[
X_S=\set{x\in \rmax^4}{x_2=2x_1\; ,\; x_4=4x_3 \; , \; 4x_1\leq x_3 \leq 5x_1} \; ,
\] 
any half-space of the form 
\begin{align}
\set{x\in \rmax^4}
{6 x_2 \oplus \delta x_4\leq 8 x_1 \oplus \delta 4 x_3 } \; , 
\label{e-general}
\end{align}
where $-1<\delta <0$, is minimal with respect to $\sV$ because its apex 
belongs to the relative interior of $X_S$. 
Moreover, this also shows that, 
even if we assume that the generators of a max-plus cone are in general position, 
the number of minimal half-spaces need not be finite. 

This is illustrated in Figure~\ref{fig-minimal}, which shows the max-plus
cone $\sV$ (rightmost picture, in blue) 
together with two minimal half-spaces containing it
corresponding to the choice of $\delta=-0.33$ (leftmost picture) and $\delta=-0.67$ (middle picture). The apex of each of these half-spaces 
belongs to the max-plus segment joining the vectors $v^2$ and $v^4$.
The existence
of an infinite family of minimal half-spaces can be seen
on the picture:
when the apex of the half-space slides along the middle part of this max-plus
segment, the intersection of the boundary of the half-space (in green) 
with the max-plus cone yields an infinite family of sets,
two instances of which are represented.
The pictures of the max-plus polytopes were generated with {\sc Polymake}~\cite{polymake}, the latest version of which contains an extension dealing with tropical polytopes~\cite{joswig-2008}.
We plotted them with {\sc Javaview}. The bounded parts of the half-spaces
and their intersections with the max-plus cone
were computed in {\sc Scicoslab} using the {\sc Maxplus toolbox}~\cite{toolbox}. A vector $x=(x_1,\ldots,x_4)$ in $\mathbb{R}^4$
is represented by the vector $(y_1,y_2,y_3)\in \mathbb{R}^3$
with $y_i=x_{i+1}-x_1$. The plane containing the $y_1,y_2$ axes
is represented by a grid (the $y_3$ axis is a vertical
line orthogonal to this grid, not represented).
\begin{figure}
\begin{minipage}{0.32\textwidth}
\begin{center}
\includegraphics[scale=0.45]{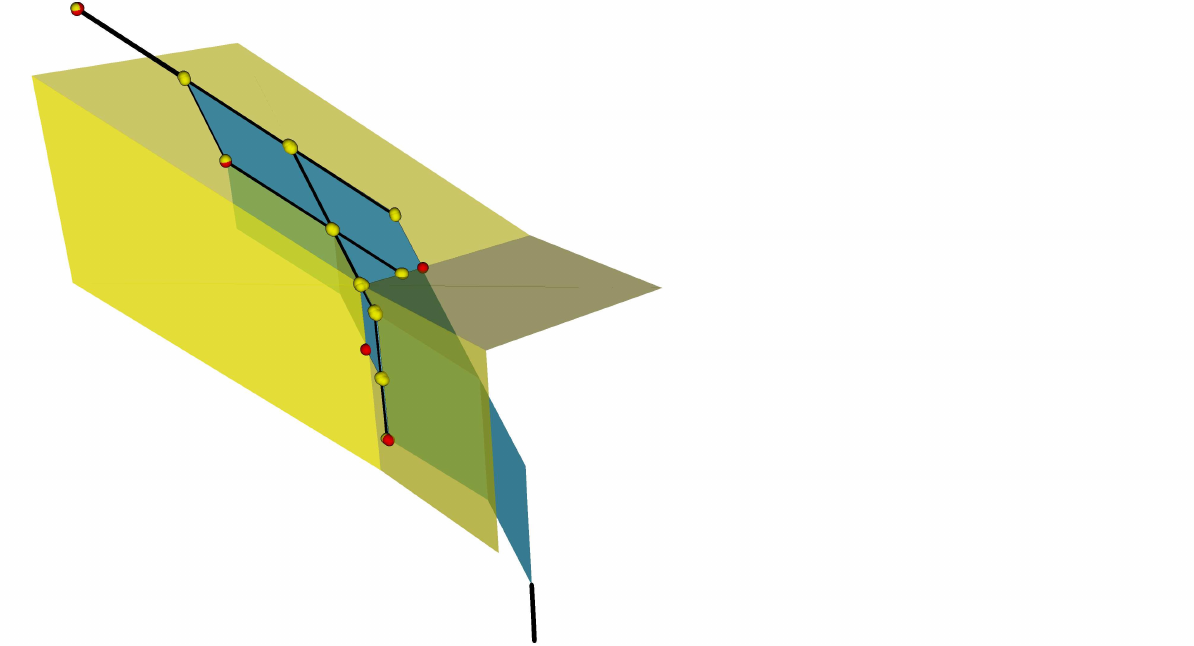}
\end{center}
\end{minipage}
\begin{minipage}{0.32\textwidth}
\begin{center}
\includegraphics[scale=0.45]{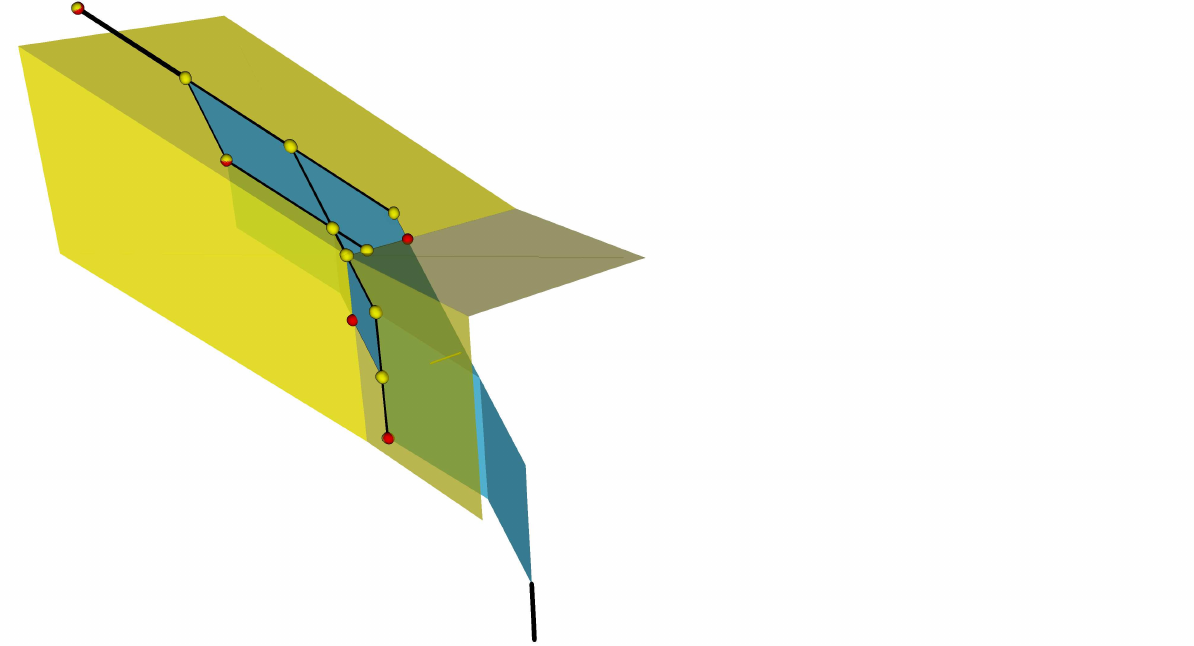}
\end{center}
\end{minipage}
\begin{minipage}{0.32\textwidth}
\begin{center}
\includegraphics[scale=0.45]{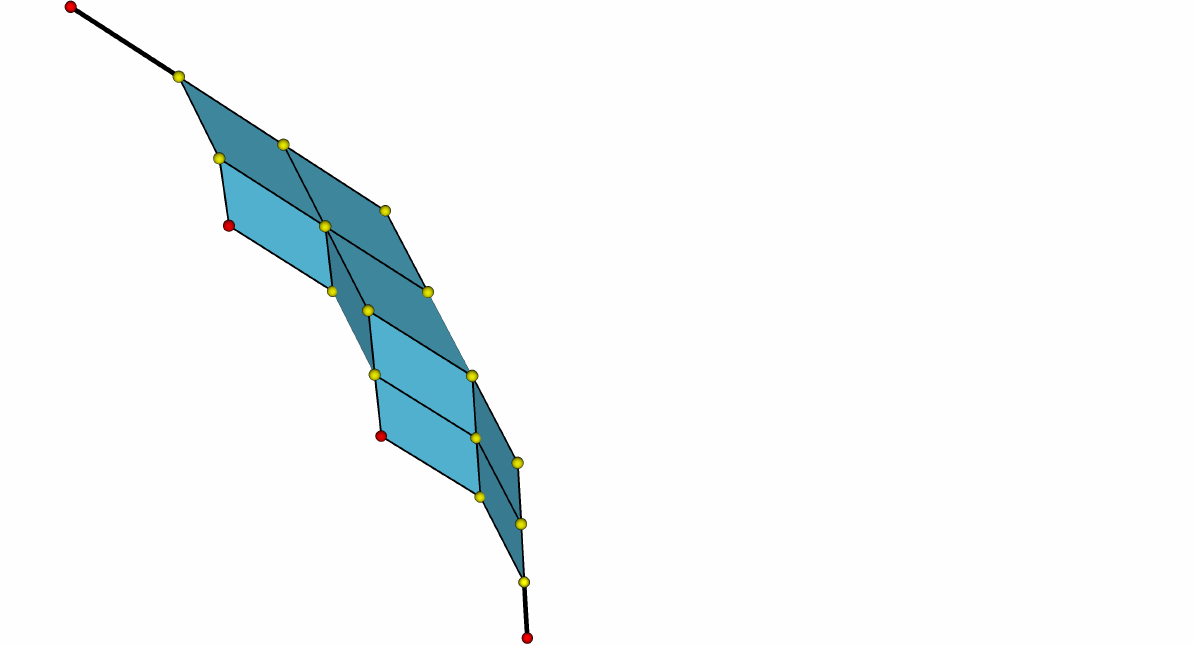}
\setlength{\unitlength}{1367sp}%
\begingroup\makeatletter\ifx\SetFigFont\undefined%
\gdef\SetFigFont#1#2#3#4#5{%
  \reset@font\fontsize{#1}{#2pt}%
  \fontfamily{#3}\fontseries{#4}\fontshape{#5}%
  \selectfont}%
\fi\endgroup%
\begin{picture}(6000,-1000)(1081,-7321)
%(8235,7305)
\put(500,-200){\makebox(0,0)[lb]{\smash{{\SetFigFont{10}{13.2}{\rmdefault}{\mddefault}{\updefault}{\color[rgb]{0,0,0}$v^4$}%
}}}}
\put(6300,-6800){\makebox(0,0)[lb]{\smash{{\SetFigFont{10}{13.2}{\rmdefault}{\mddefault}{\updefault}{\color[rgb]{0,0,0}$v^1$}%
}}}}
\put(3700,-4700){\makebox(0,0)[lb]{\smash{{\SetFigFont{10}{13.2}{\rmdefault}{\mddefault}{\updefault}{\color[rgb]{0,0,0}$v^2$}%
}}}}
\put(1300,-3500){\makebox(0,0)[lb]{\smash{{\SetFigFont{10}{13.2}{\rmdefault}{\mddefault}{\updefault}{\color[rgb]{0,0,0}$y_2$}%
}}}}
\put(2200,-2506){\makebox(0,0)[lb]{\smash{{\SetFigFont{10}{13.2}{\rmdefault}{\mddefault}{\updefault}{\color[rgb]{0,0,0}$v^3$}%
}}}}
\put(6000,-100){\makebox(0,0)[lb]{\smash{{\SetFigFont{10}{13.2}{\rmdefault}{\mddefault}{\updefault}{\color[rgb]{0,0,0}$y_3$}%
}}}}
\put(4400,-7000){\makebox(0,0)[lb]{\smash{{\SetFigFont{10}{13.2}{\rmdefault}{\mddefault}{\updefault}{\color[rgb]{0,0,0}$y_1$}%
}}}}
\put(1450,-7350){\includegraphics{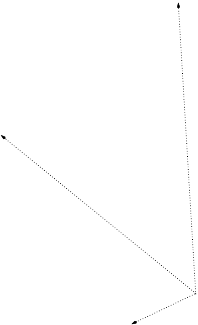}}
\end{picture}%
\end{center}
\end{minipage}
\caption{The counter-example: a max-plus polyhedral cone (right) and its intersections with two members of an infinite family of minimal half-spaces containing it (left and middle). Note that the apex of each of these minimal half-spaces is not a vertex of the 
natural cell decomposition of $\rmax^4$ induced by the generators of the cone.}\label{fig-minimal}
\end{figure}
\end{example}

\begin{remark}
As in the classical theory of convex cones, 
we could define a face of a max-plus cone $\sV\subset \rmax^n$ as its intersection 
with the closure of the complement of a minimal half-space with respect to $\sV$, 
which is also a half-space. However, unlike the classical case, 
the extreme vectors of a face of a max-plus cone defined in this way 
need not be extreme vectors of the cone, 
even in the finitely generated case. 
To see this, consider the cone $\sV \subset \rmax^4$ defined in 
Example~\ref{Ejemplo1} above and the minimal half-space 
with respect to $\sV$ given by~\eqref{MinHalfExam}. 
Then, it can be checked that the face defined by this minimal half-space 
has extreme vectors which are not extreme vectors of $\sV$. 
Two similar faces are represented in Figure~\ref{fig-minimal}. 
\end{remark}

When $n=3$, the conditions in Theorem~\ref{TheoCharacMin} imply 
that the apex of a minimal half-space with respect to $\sV$ must be a vertex 
of the natural cell decomposition of $\rmax^3$ induced by the 
generators of $\sV$. This means that $\sV$ can be expressed as a 
finite intersection of half-spaces whose apices are vertices. 
We next show that this property is also valid in higher dimensions. 
With this aim, we shall need the following 
immediate consequence of Proposition~19 of~\cite{DS04}. 

\begin{lemma}\label{LemmaVertexHull}
Let $X_S$ be a bounded cell of the natural cell decomposition 
of $\rmax^n$ induced by the generators of $\sV$. Then,   
$x\in X_S$ if and only if it can be expressed as 
$x=\min_{1\leq s\leq m} \lambda_s a^s$ 
for some scalars $\lambda_s\in \R$, where $a^s$ for 
$s\in \{1,\ldots ,m\}$ are vertices 
of the natural cell decomposition which belong to $X_S$ 
(in other words, $X_S$ is the min-plus cone generated by its vertices). 
\end{lemma}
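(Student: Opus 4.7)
The plan is to translate the defining inequalities of $X_S$ into classical difference-constraint form and then invoke Proposition~19 of~\cite{DS04}. Recall that
\[
X_S=\set{x\in \rmax^n}{x_j v^r_i\leq x_iv^r_j ,\; i,j\in\{1,\dots,n\},\; r\in S_j} \; ,
\]
which in classical notation is the system of difference constraints $x_j-x_i\leq v^r_j-v^r_i$ for $r\in S_j$. The boundedness assumption (in tropical projective space $\R^n/(1,\ldots,1)\R$) means $X_S$ is a classical bounded polyhedron modulo the diagonal.

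For the ``if'' direction I would verify directly that polyhedra of this difference-constraint type are closed under componentwise min-plus linear combinations. Suppose each $a^s$ satisfies $a^s_j-a^s_i\leq c_{ij}$ and set $x:=\min_{1\leq s\leq m}(\lambda_s+a^s)$ componentwise; choosing $s_0$ realising the minimum in $x_i$ yields
\[
x_j\leq \lambda_{s_0}+a^{s_0}_j\leq \lambda_{s_0}+a^{s_0}_i+c_{ij}=x_i+c_{ij}.
\]
Thus any min-plus combination of points of $X_S$ remains in $X_S$; in particular, min-plus combinations of vertices lying in $X_S$ stay in $X_S$.

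For the ``only if'' direction, Proposition~19 of~\cite{DS04} asserts that each bounded cell $X_S$ is generated in the min-plus sense by its one-dimensional sub-cells, which are precisely the vertices of the natural cell decomposition contained in $X_S$ (recall that a vertex is any non-zero vector lying in a one-dimensional cell, up to max-plus scaling). After the purely notational observation that the scalar action $\lambda_s a^s$ in max-plus notation coincides with the ordinary componentwise sum $\lambda_s+a^s$, this yields exactly the stated representation $x=\min_{1\leq s\leq m}\lambda_s a^s$.

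The only real obstacle is matching conventions with~\cite{DS04}: one must check that ``vertices of a bounded cell'' in the sense of~\cite{DS04} are the same as the one-dimensional cells contained in that cell. Since both references use the same natural cell decomposition and since our definition of a vertex is precisely the projective class of a one-dimensional cell, this match is immediate, and the lemma follows.
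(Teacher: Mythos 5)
Your argument is correct and follows essentially the same route as the paper, which states this lemma as an immediate consequence of Proposition~19 of Develin--Sturmfels and offers no further proof. Your direct verification that the difference-constraint system defining $X_S$ is closed under componentwise min-plus combinations, and your explicit check that the paper's ``vertices'' (one-dimensional cells) match the vertices in the sense of \cite{DS04}, are sound and simply make explicit what the paper leaves to the reader.
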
  

As a consequence, we have the following separation theorem 
in the special case of finitely generated max-plus cones 
whose generators have only finite entries. 

\begin{proposition}\label{PropApex}
Assume that $y\in \rmax^n$ does not belong to the max-plus cone $\sV$. Then, 
there exists a half-space containing $\sV$ but not $y$ whose apex 
is a vertex of the natural cell decomposition of $\rmax^n$ induced 
by the generators of $\sV$. 
\end{proposition}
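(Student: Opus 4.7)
The plan is to start from any half-space separating $y$ from $\sV$ (which exists by the tropical separation theorem for closed cones, applicable since $\sV$ is finitely generated and hence closed) and to deform its apex onto a vertex of the natural cell decomposition by means of Lemma~\ref{LemmaVertexHull}. Using the standard max-plus projection construction, we take the apex to be $\pi := \pi_{\sV}(y)$, the greatest element of $\sV$ bounded above by $y$, and set $I := \set{i}{\pi_i < y_i}$ and $J := \set{j}{\pi_j = y_j}$. Both $I$ and $J$ are non-empty ($I$ because $y \notin \sV$ forces $\pi \neq y$; $J$ because otherwise $\lambda \pi \leq y$ would still hold for some $\lambda > \unit$, contradicting the maximality of $\pi$). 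A direct check using $\pi = \oplus_r \lambda_r^\ast v^r$ with $\lambda_r^\ast = \min_k (y_k - v^r_k)$ shows that
\[
\sH := \set{x \in \rmax^n}{\oplus_{i \in I} \myminus{(\pi_i)}\, x_i \leq \oplus_{j \in J} \myminus{(\pi_j)}\, x_j}
\]
contains every generator $v^r$, hence $\sV$, while $y \notin \sH$. In particular, the apex $\pi$ itself lies in $\sV$.

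Since $\sV$ is the union of the bounded cells of the natural cell decomposition, $\pi$ belongs to some bounded cell $X_S$ (for instance $S = \type(\pi)$). Lemma~\ref{LemmaVertexHull} then provides a representation $\pi = \min_{1 \leq s \leq m} \lambda_s a^s$ with scalars $\lambda_s \in \R$ and vertices $a^1, \ldots, a^m$ of the cell decomposition lying in $X_S$; equivalently, $\myminus{(\pi_k)} = \oplus_s \mu_s \myminus{(a^s_k)}$ for every $k$, where $\mu_s := \myminus{\lambda_s} \in \R$. For each $s$, define
\[
\sH^s := \set{x \in \rmax^n}{\oplus_{i \in I} \myminus{(a^s_i)}\, x_i \leq \oplus_{j \in J} \myminus{(a^s_j)}\, x_j}
\]
with apex $a^s$ and the same partition $(I, J)$. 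Since $a^s \in X_S$ forces $S_j(a^s) \supset S_j(\pi)$ for all $j$, and since Lemma~\ref{Lema1} applied to $\sH$ guarantees $\cup_{j \in J} S_j(\pi) = \{1, \ldots, p\}$, the same lemma yields $\sV \subset \sH^s$ for every $s$.

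It remains to find some $s^\ast$ with $y \notin \sH^{s^\ast}$. Substituting $\myminus{(\pi_k)} = \oplus_s \mu_s \myminus{(a^s_k)}$ into the inequality $\oplus_{i \in I} \myminus{(\pi_i)}\, y_i > \oplus_{j \in J} \myminus{(\pi_j)}\, y_j$ expressing $y \notin \sH$ and distributing the max-plus operations, one rewrites it as $\oplus_s \mu_s f^s > \oplus_s \mu_s g^s$, where $f^s := \oplus_{i \in I} \myminus{(a^s_i)}\, y_i$ and $g^s := \oplus_{j \in J} \myminus{(a^s_j)}\, y_j$. Choosing $s^\ast$ attaining the maximum on the left and cancelling the finite scalar $\mu_{s^\ast}$ gives $f^{s^\ast} > g^{s^\ast}$, i.e.\ $y \notin \sH^{s^\ast}$, while $a^{s^\ast}$ is a vertex of the natural cell decomposition. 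The delicate point is to ensure that the initial separating half-space has apex inside $\sV$, so that Lemma~\ref{LemmaVertexHull} is applicable on a bounded cell; this is exactly what the projection construction delivers.
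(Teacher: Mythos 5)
Your proof is correct and follows essentially the same route as the paper's: project $y$ onto $\sV$ to get a separating half-space whose apex lies in $\sV$ (the paper simply cites the separation theorem of Cohen--Gaubert--Quadrat--Singer for this, with $\myminus{b}=\max\set{x\in\sV}{x\leq y}$, where you unfold the construction explicitly), then use Lemma~\ref{LemmaVertexHull} to write that apex as a min-plus combination of vertices of its cell and distribute the max-plus operations to extract one vertex that still separates. The only difference is expository detail; no further comment is needed.
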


\begin{proof}
By the separation theorem for closed cones of~\cite{cgqs04}, 
there exists a half-space 
\[ 
\sH = \set{x\in \rmax^n}
{\oplus_{i\in I} b_i x_i \leq \oplus_{j\in J} b_j x_j } 
\] 
containing $\sV$ but not $y$ whose apex $\myminus{b}$ belongs to $\sV$. 
To be more precise, in~\cite{cgqs04} it is shown that we can 
take $\myminus{b}=\max \set{x\in \sV}{x\leq y}$. 
Let $S=\type (\myminus{b})$ be the type of $\myminus{b}$. 
According to Lemma~\ref{Lema1}, we have 
$\cup_{j\in J} S_j(\myminus{b})=\left\{ 1,\ldots ,p\right\}$, 
and so the half-space 
\[ 
\set{x\in \rmax^n}
{\oplus_{i\in I} c_i x_i  \leq \oplus_{j\in J} c_j x_j } 
\] 
contains $\sV$ if $\myminus{c}$ belongs to the (bounded) cell $X_S$. 

Let $\myminus{(a^{s})}\in \rmax^n$, where $s\in \left\{ 1,\ldots ,m \right\}$ 
for some $m\in \N$, be the vertices which belong to $X_S$. Then, 
by Lemma~\ref{LemmaVertexHull} we know that there exist scalars 
$\lambda_{s}$ such that $\myminus{b}=\min_{1\leq s \leq m} \lambda_{s} \myminus{(a^s)}$, 
and thus 
\[
b= \oplus_{1\leq s \leq m} \myminus{(\lambda_{s})} a^{s} \; . 
\] 
Since $y$ does not belong to $\sH$ we have 
\[
\oplus_{i\in I} \left( \oplus_{1\leq s \leq m} \myminus{(\lambda_{s})} a_i^{s} \right) y_i  =
\oplus_{i\in I} b_i y_i > 
\oplus_{j\in J} b_j y_j  = 
\oplus_{j\in J} \left( \oplus_{1\leq s \leq m} \myminus{(\lambda_{s})} a_j^{s} \right) y_j  \; ,
\]
and so there exists $r\in \left\{ 1,\ldots ,m \right\}$ such that 
\[ 
\oplus_{i\in I} \myminus{(\lambda_{r})} a_i^{r} y_i > 
\oplus_{j\in J} \myminus{(\lambda_{r})} a_j^{r} y_j \; . 
\] 
This means that the half-space 
\[ 
\set{x\in \rmax^n}
{\oplus_{i\in I} a^r_i x_i  \leq \oplus_{j\in J} a^r_j x_j} \; , 
\]
whose apex is the vertex $\myminus{(a^r)}$, separates $\sV$ from $y$.
\end{proof}

The previous proposition leads us to study 
minimal half-spaces with a fixed apex. 

\begin{lemma}\label{Lemma3}
The maximal number of incomparable half-spaces of $\rmax^n$ with 
a given apex is ${n\choose \lfloor\frac{n}{2}\rfloor}$.
\end{lemma}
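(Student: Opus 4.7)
The plan is to reduce the statement to Sperner's theorem on antichains in the Boolean lattice by setting up a bijection between the half-spaces with given apex and the subsets of $\{1,\ldots,n\}$.

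First, I would use the section-wide convention to write every half-space in the form
\[
\sH_{I,J} = \set{x \in \rmax^n}{\oplus_{i\in I} a_i x_i \leq \oplus_{j\in J} a_j x_j},
\]
with $I, J$ disjoint subsets of $\{1,\ldots,n\}$ and $a_k \in \R$ for $k \in I\cup J$. Once the apex $\myminus{a} \in \R^n$ is fixed (hence with only finite entries), the coefficients $a_k$ are all determined, and we must have $I\cup J = \{1,\ldots,n\}$. Thus each half-space with apex $\myminus{a}$ is uniquely encoded by the subset $I\subseteq \{1,\ldots,n\}$, with $J := \{1,\ldots,n\}\setminus I$; denote it $\sH_I$. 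This yields exactly $2^n$ half-spaces with the given apex (the extreme cases $I=\emptyset$ and $I=\{1,\ldots,n\}$ correspond respectively to $\sH_I = \rmax^n$ and $\sH_I = \{\zero\}$).

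Second, I would invoke Lemma~\ref{LemmaInclu} to describe their inclusion order. For two half-spaces $\sH_I$ and $\sH_{I'}$ sharing the same apex, the values $a_j\myminus{(a_i)}$ and $b_j\myminus{(b_i)}$ appearing in that lemma coincide, so the coefficient inequality is automatically satisfied. Consequently, when $I\neq \emptyset$, the lemma gives $\sH_{I'}\subset \sH_I$ if and only if $I\subset I'$ (the condition $J'\subset J$ being equivalent since $I,J$ partition $\{1,\ldots,n\}$); and when $I=\emptyset$, $\sH_I=\rmax^n$ trivially contains every $\sH_{I'}$. In every case, the map $I\mapsto \sH_I$ is an order-reversing bijection between $(2^{\{1,\ldots,n\}},\subset)$ and the poset of half-spaces with apex $\myminus{a}$ ordered by inclusion.

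Third, I would conclude by Sperner's theorem: the largest antichain of the Boolean lattice $2^{\{1,\ldots,n\}}$ has cardinality ${n\choose \lfloor n/2\rfloor}$, attained by the family of all $\lfloor n/2\rfloor$-subsets. Through the order-reversing bijection above, antichains of subsets correspond precisely to families of pairwise incomparable half-spaces with the given apex, yielding the desired count. There is no genuine obstacle here: the entire content lies in noticing that Lemma~\ref{LemmaInclu} collapses, in the fixed-apex setting, to a purely combinatorial inclusion condition on the index sets, after which Sperner applies directly.
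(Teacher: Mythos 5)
Your proposal is correct and follows essentially the same route as the paper: reduce the inclusion order on half-spaces with a fixed apex to the inclusion order on the index sets $I$ via Lemma~\ref{LemmaInclu} (the coefficient condition being automatic when the apex is shared), and then invoke Sperner's theorem. Your treatment is slightly more explicit about the degenerate case $I=\emptyset$ and the bijectivity of $I\mapsto\sH_I$, but there is no substantive difference.
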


\begin{proof} 
By Lemma~\ref{LemmaInclu} two half-spaces 
with the same apex $\myminus{a}\in \rmax^n$ 
\[
\sH'=\set{x\in \rmax^n}
{\oplus_{i\in I'} a_i x_i  \leq \oplus_{j\in J'} a_j x_j } 
\]
and  
\[
\sH=\set{x\in \rmax^n}
{\oplus_{i\in I} a_i x_i  \leq \oplus_{j\in J} a_j x_j } 
\]
satisfy $\sH' \subset \sH$ if, and only if, $I\subset I'$. 
Therefore, the maximal number of incomparable half-spaces with a 
given apex is equal to the maximal number of incomparable 
subsets of $\left\{ 1,\ldots , n\right\} $, which is equal to 
${n\choose \lfloor\frac{n}{2}\rfloor}$ according to  
Sperner's Theorem (see~\cite{engel97}). 
\end{proof}

\begin{remark}
There exist cones $\sV\subset \rmax^n$ which have 
${n\choose \lfloor\frac{n}{2}\rfloor}$ 
minimal half-spaces with a given apex. For example, 
consider $n$ odd and define $\sV$ as the cone generated by the following 
vectors: 
\[
v^I_i:= \left\{
\begin{array}{ll}
\zero & \makebox{ if } i\in I\; , \\
\unit & \makebox{ otherwise.}
\end{array}
\right.
\]
where $I$ is any subset of $\left\{ 1,\ldots , n\right\} $ with exactly 
$ \lfloor\frac{n}{2}\rfloor$ elements. Then, 
applying Theorem~\ref{TheoCharacMin},
it can be checked that any half-space of the form  
\[
\set{x\in \rmax^n}
{\oplus_{i\in I} x_i \leq \oplus_{j\not \in I} x_j } \; , 
\]
where again $I$ is any subset of $\left\{ 1,\ldots , n\right\} $ with exactly 
$ \lfloor\frac{n}{2}\rfloor$ elements, is minimal with respect to $\sV$. 

When the generators of a cone $\sV\subset \rmax^n$ are in general position, 
it is possible to have (at least) $P(n)$ minimal half-spaces with respect to 
$\sV$ with the same apex, where $\{ P(n) \}_{n\in \N}$ is the Padovan sequence, 
which is defined by the recurrence 
\begin{align}\label{e-padovan}
P(n)=P(n-2)+P(n-3)
\end{align}
with $P(1)=P(2)=P(3)=1$. More precisely, the max-plus cone 
$\sV \subset \rmax^n$ generated by the vectors $v^r=(1 r,2 r, \ldots ,n r)^T$ 
for $r=1,\ldots ,n$, where the product is in the usual algebra, 
has $P(n)$ minimal half-spaces with apex $a$, 
where $a_i:=\sum_{k=1}^i k$ for $i=1,\ldots ,n$. 

To see this, 
in the first place note that due to the definition of $a$, 
the type of $a$ is given by $S_n(a)=\{ n \}$ 
and $S_k(a)=\{ k,k+1 \}$ for $1\leq k <n$. 
Since by Lemmas~\ref{LemmaInclu} and~\ref{Lema1} 
minimal half-spaces with a fixed apex $a$ correspond to 
subsets $J\subset \{1,\ldots ,n\}$ such that $\{S_j(a)\}_{j\in J}$ 
is a minimal covering of $\{1,\ldots ,n\}$, 
it follows that each time $S_{r-1}(a)$ and $S_{r}(a)$ 
belong to such a covering, 
then $S_{r+1}(a)$ cannot belong to 
it because $S_r(a)\subset S_{r-1}(a)\cup S_{r+1}(a)$. 
Observe also that if  $S_{r-1}(a)$ belongs to a minimal covering but 
$S_r(a)$ does not, then $S_{r+1}(a)$ must belong to it. 
Finally, since $S_n(a)\subset S_{n-1}(a)$, precisely one of these 
two sets must belong to a minimal covering. 

Let $\{S_j(a)\}_{j\in J}$ be a minimal covering of $\{1,\ldots ,n\}$,   
and assume that $S_n(a)$ belongs to it. Then, 
$S_{n-2}(a)$ must also belong to the covering.  
If we define the sets $S'_j(a)$ for $j=1,\ldots ,n-3$ by 
$S'_{n-3}(a):=\{n-3\}$ and $S'_{k}(a):=S_{k}(a)$ for $1\leq k < n-3$, 
then it can be checked 
that there is a bijection between minimal coverings of $\{1,\ldots ,n-3\}$ 
by the sets $S'_j(a)$ and minimal coverings of $\{1,\ldots ,n\}$ by the 
sets $S_j(a)$ which contain $S_n(a)$. 

Analogously, if we now assume that $S_{n-1}(a)$ belongs to a minimal covering 
of $\{1,\ldots ,n\}$, 
and if we define the sets $S''_j(a)$ for $j=1,\ldots ,n-2$ by 
$S''_{n-2}(a):=\{n-2\}$ and $S''_{k}(a):=S_{k}(a)$ for $1\leq k < n-2$, 
it can be checked that there is a bijective correspondence between minimal 
coverings of $\{1,\ldots ,n-2\}$ 
by the sets $S''_j(a)$ and minimal coverings of $\{1,\ldots ,n\}$ by the 
sets $S_j(a)$ which now contain $S_{n-1}(a)$. 

In consequence, the number of minimal coverings of $\{1,\ldots ,n\}$ 
by the sets $S_j(a)$ is given by the Padovan sequence because 
these numbers satisfy the recurrence relation that defines this sequence.  
\end{remark} 

\begin{example} 
Taking $n=4$, the argument used to establish the recurrence~\eqref{e-padovan}
defining the Padovan sequence shows that at $a=(1,3,6,10)^T$,
there are two minimal coverings of $\{1,\ldots ,4\}$ by the sets $S_j(a)$. 
One consists of $S_1(a)=\{1,2\}$ and $S_3(a)=\{3,4\}$, 
and corresponds to the half-space
\[
(-3)x_2\oplus (-10)x_4 \leq (-1)x_1\oplus (-6)x_3 
\]
which coincides with the one in~\eqref{e-general} when $\delta=-1$
and has the same shape as the ones in Figure~\ref{fig-minimal}. The 
second minimal covering consists of
$S_1(a)=\{1,2\}$, $S_2(a)=\{2,3\}$, $S_4(a)=\{4\}$,
it corresponds to the half-space
\[
(-6)x_3  \leq (-1)x_1 \oplus (-3)x_2\oplus (-10)x_4
\]
which is represented in Figure~\ref{fig-padovan}.
\begin{figure}
\begin{center}
\includegraphics[scale=0.33]{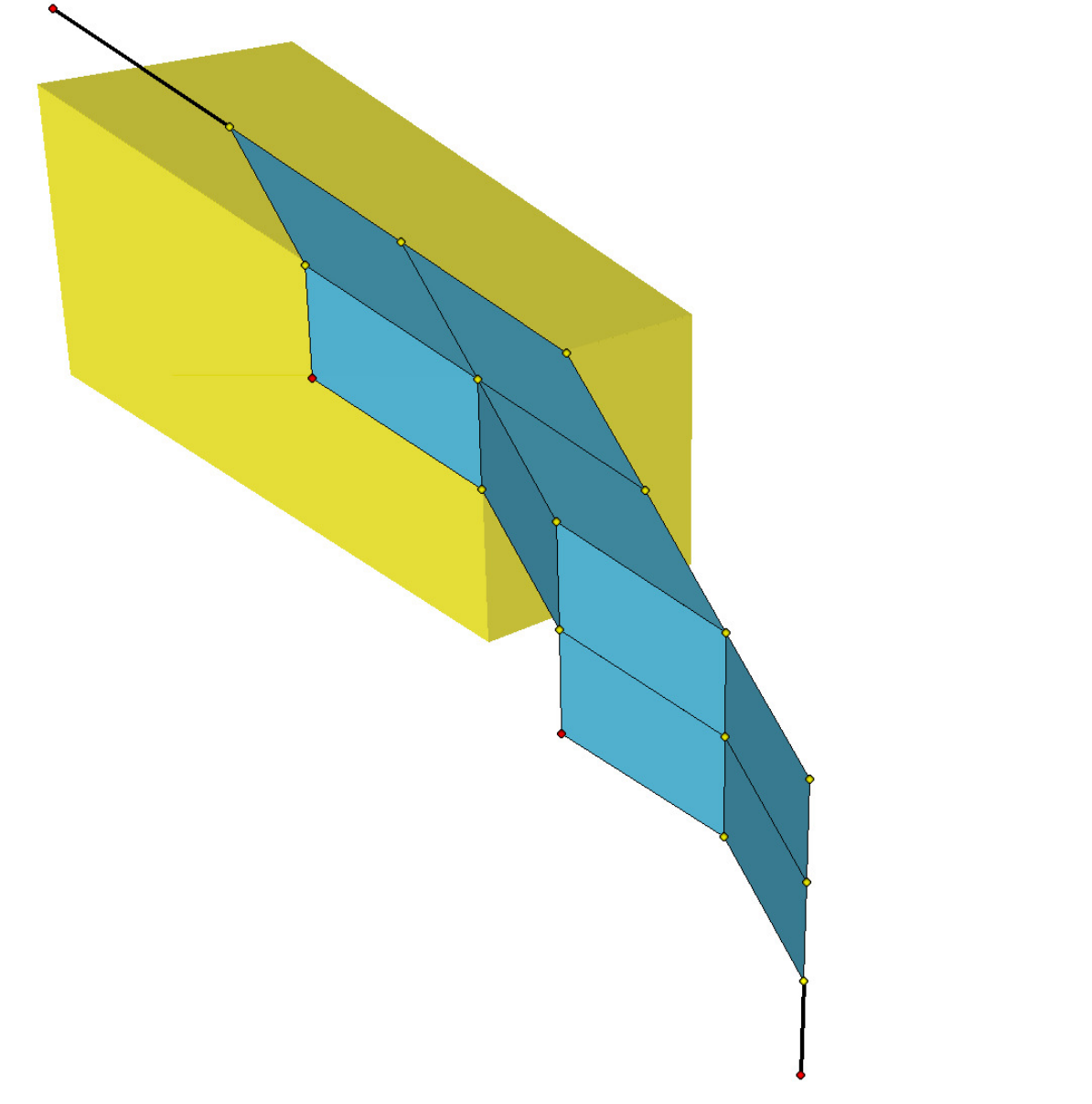}
\end{center}
\caption{One of the two minimal half-spaces with apex $(1,3,6,10)^T$.}\label{fig-padovan}
\end{figure}
\end{example}
 
\section{Relation between the extreme rays of the polar and minimal half-spaces}

In the classical theory of convex cones, 
it is known that the extreme rays of the polar of 
a convex cone correspond to its supporting half-spaces. 
Since the notion of extreme rays carries over to the max-plus 
setting~\cite{BSS,gk07} as well as the notion of polar~\cite{katz08}, 
it is natural to investigate the relation between the minimal half-spaces 
with respect to a max-plus cone and the extreme rays of its polar.  

Following~\cite{katz08}, 
we define the {\em polar} of a max-plus cone $\sV\subset \rmax^n$ as 
\[
\sV^\circ :=\set{(a,b)\in (\rmax^n)^2}{\oplus_{1\leq i\leq n} a_i x_i 
\leq \oplus_{1\leq j\leq n} b_j x_j ,\; \forall x\in \sV} \; , 
\]
i.e.\ $\sV^\circ $ represents the set of all 
the half-spaces which contain $\sV$. Conversely, 
we may consider the max-plus cone defined by the 
intersection of a set of half-spaces. 
This leads to define (see~\cite{katz08}),
for all $\sW\subset (\rmax^n)^2$, a ``dual'' polar cone
\[
\sW^\diamond := \set{x\in \rmax^n}{\oplus_{1\leq i\leq n} a_i x_i  
\leq \oplus_{1\leq j\leq n} b_j x_j  ,\; \forall (a,b)\in \sW} \; .
\]
Then, by the separation theorem for closed cones 
(\cite{zimmerman77,shpiz,cgqs04}), 
it follows that a closed cone $\sV$ is characterized by its polar cones:
\[
\sV = (\sV^\circ)^\diamond \; .
\]
In particular, when $\sV$ is finitely generated, 
this means that $\sV=\sW^\diamond$, 
where $\sW\subset (\rmax^n)^2$ is the (finite) 
set of extreme vectors of $\sV^\circ$. Thus, the extreme
vectors of the polar of $\sV$ determine a finite family
of max-plus linear inequalities defining $\sV$. 

The following theorem characterizes the extreme vectors 
of $\sV^\circ$ in terms of the generators of $\sV$. 

\begin{theorem}\label{TheoCharacExtreme}
Assume that $\sV\subset \rmax^n$ is a max-plus cone with full support 
generated by the vectors $v^{r} \in \rmax^n$, where $r=1,\ldots , p$. Then, 
up to a non-zero scalar multiple, 
the extreme vectors of $\sV^\circ $ are either $(\zero,\uvector^i)$ or 
$(\uvector^i,\uvector^i)$, for $i=1,\ldots ,n$, or have the form 
$(\uvector^i, \oplus_{j\in J} b_j \uvector^j)$
for some $i\in \{1,\dots ,n\}$, 
where $J\subset \left\{ 1,\ldots ,n\right\}\setminus \{ i \}$.  
Moreover, a vector of $\sV^\circ $ of the latter form  
is extreme if, and only if, 
the following condition is satisfied:
\begin{equation}\label{CondExtrePolar}
\makebox{For each } j\in J \makebox{ there exists } 
r\in \left\{ 1,\ldots ,p\right\} \makebox{ such that } 
v^r_i=b_j v_j^r> \oplus_{k\in J\setminus \{ j \}} b_k v^r_k 
 \; .
\end{equation}
\end{theorem}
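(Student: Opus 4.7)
The plan is to establish the characterization in three reduction steps. I would first constrain the support of $a$ by a general decomposition trick, then separately handle the degenerate case $a=\zero$, and finally analyze the structure of $b$ in the nontrivial case $a=\uvector^i$ by translating extremity into a componentwise minimality statement that matches condition~\eqref{CondExtrePolar}.

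The first reduction rests on the identity $(a,b)=\oplus_{k\in \supp a}(a_k \uvector^k, b)$, in which every summand lies in $\sV^\circ$ since $a_k x_k \leq \oplus_i a_i x_i \leq \oplus_j b_j x_j$ holds on $\sV$. If $|\supp a|\geq 2$, this is a nontrivial max-plus decomposition of $(a,b)$, contradicting extremity, so every extreme vector must satisfy $|\supp a|\leq 1$. When $a=\zero$, the analogous splitting $(\zero,b)=\oplus_{j\in \supp b}(\zero, b_j \uvector^j)$ (every summand trivially in $\sV^\circ$) reduces $b$ to a single unit vector after scaling, giving the first family $(\zero,\uvector^i)$; their extremity is immediate, since any decomposition componentwise forces one summand to coincide with the vector itself.

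In the remaining case $(a,b)=(\uvector^i,b)$, I would recast extremity as a componentwise minimality of $b$: whenever there is $b' \leq b$ with $b' \neq b$ and $(\uvector^i, b')\in \sV^\circ$, the identity $(\uvector^i,b)=(\uvector^i,b')\oplus(\zero,b)$ provides a nontrivial decomposition in $\sV^\circ$. Hence $b$ must be minimal under the constraints $v^r_i \leq \oplus_j b_j v^r_j$ for $r=1,\dots,p$. Spelling out minimality at each $j\in J:=\supp b$ imposes two requirements: one cannot drop $j$ from $J$ (so there must exist $r$ for which $j$ is the unique $k\in J$ with $b_k v^r_k \geq v^r_i$), and one cannot strictly lower $b_j$ (so the same witness $r$ must satisfy $b_j v^r_j = v^r_i$). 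Combined, these two requirements are exactly condition~\eqref{CondExtrePolar}.

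The hardest part is justifying the restriction $J\subset \{1,\dots,n\}\setminus\{i\}$. If $i\in J$, then for finite $v^r_i$ the term $b_i v^r_i$ already dominates $v^r_i$ as soon as $b_i\geq \unit$, which makes the $i$-term provide a vacuous coverage of the inequality at every generator; this allows splitting $(\uvector^i,b)$ between the redundant factor $(\uvector^i,b_i\uvector^i)$ and a nontrivial contribution supported on $J\setminus\{i\}$, so that genuinely extreme representatives can always be chosen with $i\notin J$. Once this normalization is secured, combining it with the minimality analysis above yields both directions of the characterization via condition~\eqref{CondExtrePolar}.
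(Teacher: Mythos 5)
Your outline follows essentially the same route as the paper: the splitting $(a,b)=\oplus_{k\in\supp a}(a_k\uvector^k,b)$ to force $\supp a$ to be a singleton, the analogous splitting of $(\zero,b)$ into the $(\zero,b_j\uvector^j)$, and the identification of Condition~\eqref{CondExtrePolar} with the impossibility of lowering any single coordinate $b_j$ (the paper states this in contrapositive form: if the condition fails at $j$, there is $\delta<0$ with $v^r_i\leq(\oplus_{k\in J\setminus\{j\}}b_kv^r_k)\oplus\delta b_jv^r_j$ for all $r$, yielding the decomposition $(\uvector^i,b)=(\uvector^i,(\oplus_{k\in J\setminus\{j\}}b_k\uvector^k)\oplus\delta b_j\uvector^j)\oplus(\zero,b_j\uvector^j)$). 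The one genuine gap is in your reduction ``extremity $\iff$ componentwise minimality of $b$'': the decomposition $(\uvector^i,b)=(\uvector^i,b')\oplus(\zero,b)$ only proves that an extreme vector has minimal $b$, whereas what you need to get the ``if'' direction of the theorem is the converse, that minimality forces extremality. That converse is the substantive half: one must observe that in any decomposition $(\uvector^i,b)=\oplus_s(a^s,b^s)$ with all summands in $\sV^\circ$, every $a^s$ is a multiple of $\uvector^i$ and some $a^l$ equals $\uvector^i$ exactly, so that $(a^l,b^l)=(\uvector^i,b^l)$ with $b^l\leq b$, and minimality (equivalently, the witness $r$ of Condition~\eqref{CondExtrePolar} applied to a coordinate where $b^l_j<b_j$) forces $b^l=b$. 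This is precisely the paragraph of the paper's proof you have no counterpart for. A smaller point in the same vein: your claim that a \emph{single} witness $r$ serves both for ``cannot drop $j$'' and ``cannot lower $b_j$'' requires letting the decrement tend to $0$ and using that there are only finitely many generators.

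On the normalization $i\notin J$, your argument covers only $b_i\geq\unit$; when $b_i<\unit$ and $i\in\supp\sV$ the factor $(\uvector^i,b_i\uvector^i)$ is not in $\sV^\circ$, and one should instead split off $(\zero,b_i\uvector^i)$ after noting that $x_i\leq b_ix_i\oplus(\oplus_{j\neq i}b_jx_j)$ with $b_i<\unit$ forces $x_i\leq\oplus_{j\neq i}b_jx_j$ on $\sV$. Your decomposition also degenerates when $J=\{i\}$ and $b_i=\unit$: the pair $(\uvector^i,\uvector^i)$ is in fact extreme whenever $i\in\supp\sV$ (already for $n=1$), so this sub-case genuinely cannot be excluded. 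The paper's proof is equally silent here --- it asserts $\supp(a)=\{i\}\not\subset\supp(b)$ with no more justification than you give --- so this is not a defect of your argument relative to the paper's, but it does show that the step you correctly identified as the hardest is more delicate than either text acknowledges.
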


\begin{proof} 
In the first place, note that by the definition of $\sV^\circ $, 
the vectors $(\zero ,\uvector^i)$, for $i=1,\ldots ,n$, 
belong to $\sV^\circ $,  
so these vectors are clearly extreme vectors of $\sV^\circ$ 
and the only ones of the form $(\zero ,b)$. 
Moreover, since $\sV$ has full support, $\sV^\circ $ does not contain 
vectors of the form $(\uvector^i,b_i\uvector^i)$ with $b_i<\unit$. 
This implies that $(\uvector^i,\uvector^i)$ is also 
an extreme vector of $\sV^\circ $ for any $i=1,\ldots ,n$.   

Since $\sV^\circ$ satisfies 
\[
(a'\oplus a'',b)\in \sV^\circ \implies (a',b)\in \sV^\circ \makebox{ and } 
(a'',b)\in \sV^\circ \; ,
\] 
it follows that $(a,b)$ is an extreme vector of $\sV^\circ$ 
with $a\neq \zero $ only if there exists 
$i\in \left\{ 1,\ldots ,n\right\}$ such that 
either $\sop (a)= \{ i \}\not \subset \sop (b)$ or 
$(a,b)$ is a non-zero scalar multiple of $(\uvector^i,\uvector^i)$. 
Therefore, in the former case we may assume that $a=\uvector^i$ 
for some $i\in \left\{ 1,\ldots ,n\right\}\setminus \sop (b)$. 

Let $(\uvector^i, \oplus_{j\in J} b_j \uvector^j)$, 
with $i\not \in J\subset \{1,\ldots , n \}$, be a vector of $\sV^\circ$ 
which satisfies Condition~\eqref{CondExtrePolar}. 
Assume that $(\uvector^i, \oplus_{j\in J} b_j \uvector^j)=
\oplus_{1\leq s\leq m} (a^s,b^s)$, where $m\in \N$ and
$(a^s,b^s)\in \sV^\circ$ for all $s\in \{1,\ldots ,m\}$. Then, 
there exists $l\in \{1,\ldots ,m\}$ such that $a^l = \uvector^i$. 
We claim that 
$b^l =\oplus_{j\in J} b_j \uvector^j$, which implies that 
$(\uvector^i, \oplus_{j\in J} b_j \uvector^j)$ is an extreme vector of 
$\sV^\circ $. By the contrary, assume that 
$b^l \neq \oplus_{j\in J} b_j \uvector^j$. 
Then, since $b^l \leq \oplus_{j\in J} b_j \uvector^j$, 
we must have $b_j^l< b_j$ for some $j\in J$. 
By Condition~\eqref{CondExtrePolar} for this $j\in J$ 
there exists $r\in \left\{ 1,\ldots ,p\right\}$ 
such that $v^r_i=b_j v_j^r> (\oplus_{k\in J\setminus \{ j \}} b_k v^r_k)$, 
and thus 
\[
\oplus_{1\leq h\leq n} a_h^l v_h^r=v^r_i=b_j v_j^r>
(\oplus_{k\in J\setminus \{ j \}} b_k v^r_k)\oplus  b_j^l v_j^r 
\geq \oplus_{1\leq k\leq n} b_k^l v^r_k \; , 
\]
which contradicts the fact that $(a^l ,b^l )\in \sV^\circ$. 
This proves the ``if'' part of the second statement of the theorem. 

Now assume that $(\uvector^i, \oplus_{j\in J} b_j \uvector^j)$ 
is an extreme vector of $\sV^\circ$. 
If Condition~\eqref{CondExtrePolar} was not satisfied, 
there would exist $j\in J$ and $\delta < 0$ such that 
\[
v_i^r \leq (\oplus_{k\in J\setminus \{ j \}} b_k v^r_k)\oplus \delta b_j v_j^r 
\]
for all $r\in \left\{ 1,\ldots ,p\right\}$, implying that 
$(\uvector^i,(\oplus_{k\in J\setminus \{ j \} } b_k \uvector^k)
\oplus \delta b_j \uvector^j ) \in \sV^\circ $. Then, 
we would have 
\[
(\uvector^i, \oplus_{j\in J} b_j \uvector^j) = 
(\uvector^i, (\oplus_{k\in J\setminus \{ j \} } b_k \uvector^k) 
\oplus \delta b_j \uvector^j ) \oplus (\zero , b_j \uvector^j) \;
\]
which contradicts the fact that 
$(\uvector^i, \oplus_{j\in J} b_j \uvector^j)$ 
is extreme because $(\zero , b_j \uvector^j) \in \sV^\circ $. 
This completes the proof of the theorem.  
\end{proof} 
More generally, there is a hypergraph characterization of the extreme
points of a max-plus cone defined by finitely many linear inequalities~\cite{AGK09}. In the special case of the polar, Theorem~\ref{TheoCharacExtreme} shows that this hypergraph reduces to a star-like graph.

The following proposition shows that the
extreme vectors of the polar $\sV^\circ$ are special minimal half-spaces,
up to a projection of $\sV$.
Here, $\rmax^{J\cup \{i\}}$ 
denotes the vectors obtained by keeping only the entries of vectors 
of $\rmax^n$ whose indices belong to the set $J\cup \{i\}$.
\begin{proposition}
A vector $(\uvector^i, \oplus_{j\in J} b_j \uvector^j)$ 
of the polar $\sV^\circ $ is extreme if, and only if, 
\[
\set{x\in \rmax^{J\cup \{i\} } }{x_i  \leq \oplus_{j\in J} b_j x_j}
\] 
is a minimal half-space with respect to the projection of $\sV$ on 
$\rmax^{J\cup \{i\} }$.
\end{proposition}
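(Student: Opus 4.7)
The plan is to match the characterization of extreme vectors of $\sV^\circ$ given by Theorem~\ref{TheoCharacExtreme} with the characterization of minimal half-spaces of Theorem~\ref{TheoCharacMin}, applied to the projected cone $\pi(\sV) \subset \rmax^{J\cup\{i\}}$.

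I first set up notation. Let $\pi : \rmax^n \to \rmax^{J\cup\{i\}}$ denote the coordinate projection, so that $\pi(\sV)$ is generated by $w^r := \pi(v^r)$ for $r=1,\dots,p$. The half-space
\[
\sH := \set{x\in \rmax^{J\cup\{i\}}}{x_i \leq \oplus_{j\in J} b_j x_j}
\]
has apex $\myminus{a}$ with $a_i = \unit$ and $a_j = b_j$ for $j\in J$; in the notation of Theorem~\ref{TheoCharacMin} its index sets are $I = \{i\}$ and $J$. By construction, $(\uvector^i, \oplus_{j\in J} b_j \uvector^j) \in \sV^\circ$ is equivalent to $\pi(\sV) \subset \sH$, since the defining inequality only involves coordinates in $J\cup\{i\}$.

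The key observation is that, with $I=\{i\}$, Condition~\eqref{item:C2} of Theorem~\ref{TheoCharacMin} specializes to the requirement that for each $j\in J$ there exist $r$ lying in $S_i(\myminus{a})\cap S_j(\myminus{a})$ but in no $S_k(\myminus{a})$ with $k\in J\setminus\{j\}$. A direct computation of the types $S_k(\myminus{a})$ with respect to the generators $w^r$ of $\pi(\sV)$ (using $w^r_i = v^r_i$, $w^r_j = v^r_j$ together with $a_i = \unit$ and $a_j = b_j$) shows that this is equivalent to the existence of $r$ with $v^r_i = b_j v^r_j > \oplus_{k\in J\setminus\{j\}} b_k v^r_k$, which is precisely Condition~\eqref{CondExtrePolar} of Theorem~\ref{TheoCharacExtreme}. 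Moreover, Condition~\eqref{item:C2} trivially implies Condition~\eqref{item:C1}, since the witness $r$ already lies in $S_i(\myminus{a})\cap S_j(\myminus{a})$, and Condition~\eqref{item:C3} is equivalent, via Lemma~\ref{Lema1}, to $\pi(\sV) \subset \sH$.

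Assembling these pieces, the vector $(\uvector^i, \oplus_{j\in J} b_j \uvector^j)$ is extreme in $\sV^\circ$ iff it lies in $\sV^\circ$ and Condition~\eqref{CondExtrePolar} holds (by Theorem~\ref{TheoCharacExtreme}), iff $\pi(\sV) \subset \sH$ and Conditions~\eqref{item:C1}--\eqref{item:C3} hold, iff $\sH$ is minimal with respect to $\pi(\sV)$ (by Theorem~\ref{TheoCharacMin}). The degenerate case $J=\emptyset$, for which $(\uvector^i, \zero)$ is extreme in $\sV^\circ$ exactly when $i\notin \supp \sV$, fits this pattern trivially as $\sH$ then reduces to $\{\zero\}$. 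The main obstacle is purely bookkeeping: carefully computing the types $S_k(\myminus{a})$ relative to the projected generators and checking that Condition~\eqref{item:C2} translates exactly into Condition~\eqref{CondExtrePolar}. No deeper argument is needed beyond the two characterizations already established.
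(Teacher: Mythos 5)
Your argument is correct, but it routes through a different ingredient than the paper: where you invoke Theorem~\ref{TheoCharacMin} (the type-based characterization of minimal half-spaces) and check that, for $I=\{i\}$, Condition~\eqref{item:C2} specializes exactly to Condition~\eqref{CondExtrePolar} while~\eqref{item:C1} becomes redundant and~\eqref{item:C3} encodes membership in $\sV^\circ$ via Lemma~\ref{Lema1}, the paper derives the proposition from Theorem~\ref{TheoCharacExtreme} together with Lemma~\ref{LemmaInclu} alone, i.e.\ it compares half-spaces with $I=\{i\}$ directly through the inclusion criterion (shrinking $J$ or decreasing some ratio $b_j\myminus{(b_i)}$) and matches that with the perturbation argument already present in the proof of Theorem~\ref{TheoCharacExtreme}. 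Your translation of the types is accurate, and the reduction of~\eqref{item:C2} to~\eqref{CondExtrePolar} is the right computation; what the paper's route buys is independence from the standing hypotheses of the section on minimal half-spaces --- Theorem~\ref{TheoCharacMin} is proved under the assumption that all vectors (in particular the generators, hence their projections $\pi(v^r)$, and the apex) have only finite entries, whereas Lemma~\ref{LemmaInclu} only needs the half-space coefficients to be finite, so it applies verbatim to the general generators $v^r\in\rmax^n$ of the final section. If you keep your route, you should either restrict to finite-entry generators or argue separately (as you begin to do for $J=\emptyset$) that the degenerate cases where the type machinery is not available still conform to the statement; what you have is a plausible but not fully justified appeal to a theorem outside its stated scope. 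Conceptually, though, your proof is the cleaner ``match two characterizations'' argument and is essentially a specialization of the same coefficient comparison.
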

\begin{proof}
This follows readily from Theorem~\ref{TheoCharacExtreme} and 
Lemma~\ref{LemmaInclu}.
\end{proof}
\begin{remark}
When the entries of the generators of $\sV$ 
are all finite, if $(\uvector^i, \oplus_{j\in J} b_j \uvector^j)$ 
is an extreme generator of $\sV^\circ $, 
Proposition~17 of~\cite{DS04} and Condition~\eqref{CondExtrePolar} imply that the 
projection of $(\oplus_{j\in J} \myminus{(b_j)} \uvector^j)\oplus \uvector^i $ 
on $\rmax^{J\cup \{i\} }$ is a vertex of the natural cell decomposition of 
$\rmax^{J\cup \{i\} }$ induced by the projection of the generators of 
$\sV$ on $\rmax^{J\cup \{i\}}$. 
\end{remark}

\begin{figure}
\begin{center}
\begin{picture}(0,0)%
\includegraphics{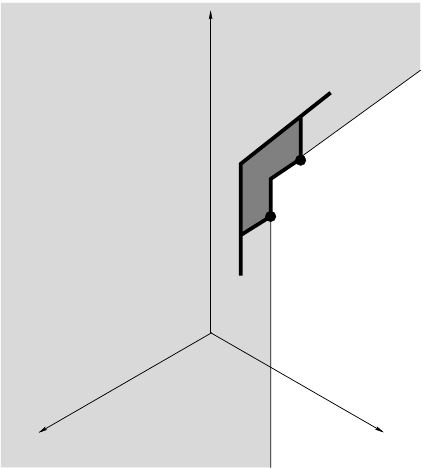}%
\end{picture}%
\setlength{\unitlength}{1579sp}%
\begingroup\makeatletter\ifx\SetFigFont\undefined%
\gdef\SetFigFont#1#2#3#4#5{%
  \reset@font\fontsize{#1}{#2pt}%
  \fontfamily{#3}\fontseries{#4}\fontshape{#5}%
  \selectfont}%
\fi\endgroup%
\begin{picture}(8434,9334)(1789,-6683)
\put(2401,-5611){\makebox(0,0)[lb]{\smash{{\SetFigFont{10}{12.0}{\rmdefault}{\mddefault}{\updefault}{\color[rgb]{0,0,0}$x_1$}%
}}}}
\put(6151,2264){\makebox(0,0)[lb]{\smash{{\SetFigFont{10}{12.0}{\rmdefault}{\mddefault}{\updefault}{\color[rgb]{0,0,0}$x_3$}%
}}}}
\put(2101,164){\makebox(0,0)[lb]{\smash{{\SetFigFont{10}{12.0}{\rmdefault}{\mddefault}{\updefault}{\color[rgb]{0,0,0}$x_2 \leq 2 x_1 \oplus (-3) x_3$}%
}}}}
\put(9376,-5686){\makebox(0,0)[lb]{\smash{{\SetFigFont{10}{12.0}{\rmdefault}{\mddefault}{\updefault}{\color[rgb]{0,0,0}$x_2$}%
}}}}
\end{picture}%
\end{center}
\caption{Illustration of the relation between extreme vectors of the polar and minimal half-spaces. The minimal half-space is in light gray. The two support vectors are represented by bold points.}
\label{figureExtMin} 
\end{figure}

\begin{example}
Consider again the max-plus cone $\sV\subset \rmax^4$ of 
Example~\ref{Ejemplo1}. Applying Theorem~\ref{TheoCharacExtreme}, 
it can be checked that $(\uvector^2,2\uvector^1\oplus (-3)\uvector^3 )$  
is an extreme vector of $\sV^\circ $. The projection of $\sV$ on 
$\rmax^{ \{1,2,3\} }$ is represented in Figure~\ref{figureExtMin} 
by the bounded dark gray region together with the two line segments 
joining the points $(0,1,2)^T$ and $(0,4,8)^T$ to it. 
The unbounded light gray region represents the projection of the half-space 
$\set{x\in \rmax^4}{x_2  \leq 2 x_1\oplus (-3) x_3}$. 
The fact that this projection is minimal with respect to the projection 
of $\sV$ is geometrically clear from the figure. 
\end{example}

\begin{remark}
Condition~\eqref{CondExtrePolar} of Theorem~\ref{TheoCharacExtreme} 
shows that when $(\uvector^i, \oplus_{j\in J} b_j \uvector^j)$ is an
extreme vector of the polar $\sV^\circ $, the hyperplane
\[
\sH^{=}=\set{x\in \rmax^n}{x_i = \oplus_{j\in J} b_j x_j}
\]
contains at least $|J|$ generators $v^r$ of $\sV$. The latter
may be thought of as {\em support vectors}. It also follows
from this theorem that the coefficients
$b_j$ of this hyperplane are uniquely determined by these support vectors.
\end{remark}
\begin{remark}
We noted above that the set $\sW$ of extreme vectors of the polar $\sV^\circ$ 
satisfies $\sW^\diamond =\sV$, in other words, it yields
a finite family of max-plus linear inequalities defining $\sV$,
the size of which can be bounded by using the results of~\cite{AGK09}. However,
the bipolar theorem of~\cite{katz08}
shows that $\sW$ is not always a minimal set with this property.
\end{remark}

\end{document}